\documentclass[11pt]{article}
\usepackage{amsmath, amsfonts, amssymb, mathtools}
\usepackage{graphicx, amsthm, color}
\usepackage{pgfplots, caption, subcaption, tikz, tikz-cd} 
\usepackage{float}
\usepackage{enumitem}
\usepackage{cleveref}
\usepackage{euscript}

\usepackage[scr=boondox]{mathalpha}

\usepackage[T1]{fontenc}
\usepackage{fbb}

\newtheorem{theorem}{Theorem}[section]

\newtheorem{lemma}[theorem]{Lemma}
\newtheorem{proposition}[theorem]{Proposition}
\newtheorem{corollary}[theorem]{Corollary}

\newtheorem{claim}[theorem]{Claim}

\theoremstyle{definition}

\numberwithin{equation}{section}
\newtheorem{remark}[theorem]{Remark}

\newcommand{\R}{\mathbb{R}}
\newcommand{\N}{\mathbb{N}}
\newcommand{\A}{\EuScript A}

\begin{document}

\begin{center}
{\LARGE A convexity criterion via the De Giorgi slope}
\end{center}
\smallskip
\begin{center}
{\Large \textsc{Tahar Z. Boulmezaoud, Aris Daniilidis and Tr\'i Minh L\^e}}
\end{center}

\bigskip

\noindent\textbf{Abstract.} Let $X$ be a Banach space and $f\in\mathcal{C}^1(X)$ be bounded from below. We show that if for some $m\geq 1$, the function $x\mapsto \|\nabla f(x)\|^m$ is convex, then $f$ is convex. We also establish a more general version of this result: if $f$ is continuous and bounded from below, then it is convex, provided $x\mapsto s_f(x)^m$ is convex for some $m\geq 1$, where $s_f$ denotes the (De Giorgi) metric slope of $f$.

\bigskip

\noindent\textbf{Key words.} Convexity criterion, metric slope, maximal descent curves. 

\vspace{0.6cm}

\noindent\textbf{AMS Subject Classification} \ \textit{Primary} 26B25, 49J52 \textit{Secondary} 35F21, 37C10.


\section{Introduction}
In \cite[Corollary~3.17]{BCD_2018} the following convexity criterion was established for $\mathcal{C}^2$-smooth functions in a Hilbert space $\mathcal{H}$:
\begin{itemize}
\item  Let $f\in\mathcal{C}^2(\mathcal{H})$ be bounded below. If $V_2(x)= \|\nabla f(x)\|^2$ is convex, then $f$ is convex.
\end{itemize}
The aim of this work is to extend this criterion to nonsmooth functions defined on a Banach space~$X$. Notice that in principle, this is a challenging task, since the proof of the aforementioned criterion given in \cite{BCD_2018}, depends heavily on the Hilbertian structure as well as on the $\mathcal{C}^2$-smoothness assumption, since it is based on the study of the second order system $\ddot x(t)= \frac{1}{2} \nabla V_2(x)$. \smallskip\newline

As was the case in recent determination results (see \cite{DD_2023, DST_2025, PSV_2021, TZ_2023}) as well as in recent results relating to Monge solutions of the Hamilton-Jacobi equations (\cite{LSZ_2021, LW_2026}), the modulus of the gradient $\|\nabla f\|$ is now replaced by the local (or metric) slope $s_f$ introduced by De Giorgi (see \cite{AGS_2008, GMT_1980} \textit{e.g.}) and defined as follows:
\[
    s_f(x) := \limsup_{y \to x}\, \dfrac{\max \{ f(x) - f(y), 0 \}}{\| x - y \|}\,, \quad\text{for every } x\in X.
\]
Our main result (see forthcoming Theorem~\ref{thm.conti-case}) reads as follows:
\begin{itemize}
    \item Let $f \in \mathcal{C}(X)$ be bounded from below.
If $V_m(x):= s_f(x)^m$ is convex, for some~$m \geq 1$, then $f$ is convex.
\end{itemize}
The proof of the above result will be given in the next section. As expected, the approach differs significantly from the one in \cite{BCD_2018} and borrows from techniques introduced in \cite{GHN_2015, LSZ_2021}. \smallskip\newline
In this work, we denote by $$\mathrm{Crit}(f)=\{s_f=0\}:=\{x\in X:\,s_f(x)=0\}$$ the set of critical points of $f$. Notice that this is the set $\{ \nabla f = 0 \}$ in the smooth case. \smallskip\newline 
A function $f$ is called coercive, if $\underset{\|x\|\to\infty}{\lim}\,f(x)=+\infty$, or equivalently, if the sublevel sets $\{f \leq r\}$, $r\in \R$, are compact.

\section{Proof of the main result}
For the convenience of the reader, we first present an easy proof in the particular case of a $\mathcal{C}^1$-smooth coercive function in finite dimensions. 

\subsection{A simple proof in the $\mathcal{C}^1$-setting}\label{ss-2.1}

Given a $\mathcal{C}^1$-smooth function $f: \mathbb{R}^d \rightarrow \mathbb{R}$ and $w \in \R^d$, we denote by 
$\gamma_w$ the gradient flow of $f$ starting from $w$, that is,
    \[
        \begin{cases}
            \dot{\gamma}_w(t) = -\nabla f(\gamma_w(t)), & t\geq 0,\\
            \gamma_w(0)=w.
        \end{cases}
    \]
If the curve $\{\gamma_w(t)\}_{t\geq 0}$ lies in a compact set (in particular, if $f$ is coercive), then $\mathrm{dist}(\gamma_w(t), \mathrm{Crit} f) \underset{t \to \infty}{\longrightarrow} \,0$. In what follows, we shall use the following result.
\begin{lemma}\label{lem.ar}
Let $f \in \mathcal{C}^1(\R^d)$ be coercive and $\gamma_w$ the gradient orbit of $f$ starting from $w \in \R^d$.
Then, for every sequence $\{ t_n \}_{n\geq 1}$ with $t_n  \underset{n \to \infty}{\longrightarrow} \infty$, there exist a subsequence $\{t_{n_k}\}_{k\geq 1}$ and $p_w \in \mathrm{Crit}(f)$ such that
$\gamma_w(t_{n_k}) \underset{k \to \infty}{\longrightarrow} \, p_w.$
    
\end{lemma}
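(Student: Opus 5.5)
The plan is the standard energy-dissipation argument for gradient flows, combined with compactness of sublevel sets. \textbf{Step 1: the orbit stays in a compact set.} Along the flow we have
\[
\frac{d}{dt} f(\gamma_w(t)) = \langle \nabla f(\gamma_w(t)), \dot\gamma_w(t)\rangle = -\|\nabla f(\gamma_w(t))\|^2 \le 0 .
\]
Hence $\gamma_w(t)$ stays in the sublevel set $\{f\le f(w)\}$, which is compact by coercivity. Since $\nabla f$ is continuous, the local solution cannot blow up inside a compact set, so $\gamma_w$ is defined on all of $[0,\infty)$. This also justifies the setting of the lemma.

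\textbf{Step 2: the dissipation is integrable.} Since $f$ is bounded below on the compact sublevel set, $t\mapsto f(\gamma_w(t))$ is nonincreasing and bounded below. Integrating the identity above gives
\[
\int_0^\infty \|\nabla f(\gamma_w(t))\|^2\,dt \le f(w)-\inf f <\infty .
\]

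\textbf{Step 3: the gradient tends to zero along the orbit.} Integrability alone gives only $\liminf_{t\to\infty}\|\nabla f(\gamma_w(t))\|=0$. To upgrade this to a full limit, I would use uniform continuity. The map $g(t):=\|\nabla f(\gamma_w(t))\|$ is uniformly continuous on $[0,\infty)$ for two reasons:
\begin{itemize}
\item $\gamma_w$ is Lipschitz, because $\|\dot\gamma_w\|\le M:=\max_{\{f\le f(w)\}}\|\nabla f\|$;
\item $\nabla f$ is uniformly continuous on the compact set $\{f\le f(w)\}$.
\end{itemize}
Now argue by contradiction. Suppose $g(s_j)\ge\varepsilon$ along a sequence $s_j\to\infty$, which we may take to be $1$-separated. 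By uniform continuity there is $\delta\in(0,1/2)$ with $g\ge\varepsilon/2$ on each interval $[s_j,s_j+\delta]$. These intervals are disjoint, so $\int_0^\infty g^2\,dt\ge \sum_j \delta\varepsilon^2/4=\infty$, contradicting Step 2. Therefore $\|\nabla f(\gamma_w(t))\|\to0$ as $t\to\infty$.

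This is the only step needing care, since $L^2$-integrability does not by itself force convergence to zero. Uniform continuity is what rescues the argument, and it comes from working on the compact sublevel set.

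\textbf{Step 4: extraction.} Given $t_n\to\infty$, the points $\gamma_w(t_n)$ lie in the compact set $\{f\le f(w)\}$. Bolzano--Weierstrass gives a subsequence with $\gamma_w(t_{n_k})\to p_w$. By continuity of $\nabla f$ and Step 3,
\[
\nabla f(p_w)=\lim_{k\to\infty}\nabla f(\gamma_w(t_{n_k}))=0,
\]
so $p_w\in\mathrm{Crit}(f)$.

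The same argument also yields the stated consequence $\mathrm{dist}(\gamma_w(t),\mathrm{Crit} f)\to0$. If this failed along some sequence, Step 4 would extract a subsequence converging to a critical point, which is a contradiction.
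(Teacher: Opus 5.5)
Your proof is correct and follows the same route as the paper: $f$ decreases along the flow, the orbit stays in the compact sublevel set, $\|\nabla f(\gamma_w)\|^2$ is integrable, and extraction plus continuity of $\nabla f$ give a critical limit point. The one difference is in your favor: the paper infers $\|\nabla f(\gamma_w(t))\|\to 0$ from integrability alone, whereas you correctly note that this step needs the uniform continuity of $t\mapsto\|\nabla f(\gamma_w(t))\|$ on the compact sublevel set (a Barbalat-type argument), and you supply that justification.
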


\begin{proof}
Along the gradient orbit $\gamma_w$, we have
\[
    \frac{d}{dt} f(\gamma_w(t))
    = \nabla f(\gamma_w(t))\cdot \dot \gamma_w(t)
    = -\|\nabla f(\gamma_w(t))\|^2 \leq 0.
\]
Thus $f(\gamma_w(t))\leq f(w)$ for all $t\geq 0$ and the curve $\gamma_w$ lies in the sublevel set $\{ f \leq f(w)\}$, which is compact, since $f$ is coercive. 
Moreover,
\[
    \int_0^\infty \|\nabla f(\gamma_w(t))\|^2\,dt \,\,\leq \,\, f(w)- \inf\,f \, <\, + \infty\,.
\]
Since $t \mapsto \| \nabla f(\gamma_w(t))\|^2$ is integrable on $[0, +\infty)$, we infer that $\lim_{t \to \infty} \| \nabla f(\gamma_w(t)) \| = 0$. Fix $\{ t_n \}_{n \geq 1}$ with $\lim_{n \to \infty} t_n = +\infty$.
Since the set $\{ f \leq f(w) \}$ is compact, there exist a subsequence $\{ t_{n_k} \}$ and a vector $p_w \in \R^d$ such that $\lim_{k \to \infty} \gamma_w(t_{n_k}) = p_w$. 
Since $\nabla f$ is continuous, we obtain $\nabla f(p_w) = 0$ and so $p_w \in \mathrm{Crit}(f)$, which completes the proof.
\end{proof}

We shall also need the following lemma.

\begin{lemma}\label{lem.crit}
Let $f \in \mathcal{C}^1(\R^d)$ be coercive and assume that $V_2(x) := \| \nabla f(x) \|^2$ is convex. Then $\mathrm{Crit}f$ is nonempty and convex and $f$ is constant there.
\end{lemma}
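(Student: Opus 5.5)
Nonemptiness comes for free from coercivity. A continuous coercive function on $\R^d$ attains its minimum at some $x_0$, and $\nabla f(x_0)=0$, so $\mathrm{Crit}f\neq\emptyset$. Alternatively, Lemma~\ref{lem.ar} applied to any orbit $\gamma_w$ yields a critical point $p_w$.

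Convexity of the critical set follows from the convexity of $V_2$. Since $V_2\geq 0$, we have
\[
\mathrm{Crit}f=\{V_2=0\}=\{V_2\le 0\},
\]
which is a sublevel set of the convex function $V_2$ and hence convex. It is also closed, by continuity of $\nabla f$.

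Finally, I show that $f$ is constant on $\mathrm{Crit}f$.
\begin{itemize}
\item Take $p,q\in\mathrm{Crit}f$. By convexity of the critical set, the whole segment $c(t)=p+t(q-p)$, $t\in[0,1]$, lies in $\mathrm{Crit}f$.
\item The function $\varphi(t)=f(c(t))$ is $\mathcal{C}^1$, with
\[
\varphi'(t)=\nabla f(c(t))\cdot(q-p)=0 \quad\text{for all } t\in[0,1].
\]
\item Hence $\varphi$ is constant, and $f(p)=f(q)$.
\end{itemize}

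None of these steps is a real obstacle. The only point needing care is that constancy requires connectedness of $\mathrm{Crit}f$ along segments. That is exactly what convexity provides: a merely closed critical set could carry different critical values on different components. This is why convexity of the critical set must be established before constancy.
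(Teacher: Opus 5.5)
Your proof is correct and follows essentially the same route as the paper: nonemptiness from coercivity, convexity of $\mathrm{Crit}f=\{V_2=0\}$ as a sublevel set of the nonnegative convex function $V_2$, and constancy by differentiating $f$ along a segment contained in $\mathrm{Crit}f$. The only difference is that you spell out the minimizer argument for nonemptiness, which the paper leaves implicit.
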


\begin{proof}
Coercivity of $f$ yields the nonemptiness of $\mathrm{Crit}(f)$.
Notice that the (non-negative) function $V_2$ is convex and $\mathrm{Crit}(f) = \{ V_2 = 0 \}$. Therefore $\mathrm{Crit}(f)$ is convex. Take now any $x,y\in \mathrm{Crit}(f)$. Then for every $s \in (0, 1)$, we have $x+s(y-x) \in \mathrm{Crit}(f)$ and
\[
    \frac{d}{ds} f(x+s(y-x))
    = \underbrace{\nabla f(x+s(y-x))}_{=0}\cdot (y-x)
    =0.
\]
It follows that $f(x)=f(y)$ yielding that $f$ is constant on $\mathrm{Crit}(f)$.
\end{proof}

We are now ready to establish our main result in the particular case where $f$ is $\mathcal{C}^1$-smooth and coercive.

\begin{proposition}\label{thm.C1-coercive}
   Let $f \in \mathcal{C}^1(\R^d)$ be coercive.
    Assume that $V_2(x) := \| \nabla f(x) \|^2$ is convex.
    Then, $f$ is convex.
\end{proposition}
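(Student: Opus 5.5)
The plan is to express $f$ through a variational formula whose integrand is jointly convex in position and velocity; convexity of $f$ then follows by taking convex combinations of curves. By Lemma~\ref{lem.crit}, $K:=\mathrm{Crit}(f)$ is nonempty and convex, and $f\equiv c$ on $K$ for some constant $c$. For $x\in\R^d$, let $\Gamma_x$ be the class of locally absolutely continuous curves $\gamma:[0,\infty)\to\R^d$ with $\gamma(0)=x$, bounded image, and $\liminf_{t\to\infty}\mathrm{dist}(\gamma(t),K)=0$. I would prove the representation
\[
f(x)-c \;=\; \inf_{\gamma\in\Gamma_x}\, \mathcal{J}(\gamma),\qquad \mathcal{J}(\gamma):=\int_0^\infty \tfrac12\bigl(\|\dot\gamma(t)\|^2+V_2(\gamma(t))\bigr)\,dt .
\]
Once this holds, convexity is immediate. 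Take $x_0,x_1$, near-optimal curves $\gamma_0\in\Gamma_{x_0}$, $\gamma_1\in\Gamma_{x_1}$, and set $\gamma_\lambda:=(1-\lambda)\gamma_0+\lambda\gamma_1$. The integrand $(y,v)\mapsto \tfrac12(\|v\|^2+V_2(y))$ is jointly convex, so $\mathcal J(\gamma_\lambda)\le (1-\lambda)\mathcal J(\gamma_0)+\lambda\mathcal J(\gamma_1)$. Moreover $\gamma_\lambda$ starts at $x_\lambda$ and stays bounded.

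For the endpoint condition on $\gamma_\lambda$, convexity of $K$ gives $\mathrm{dist}(\gamma_\lambda,K)\le(1-\lambda)\mathrm{dist}(\gamma_0,K)+\lambda\,\mathrm{dist}(\gamma_1,K)$. The difficulty is that the liminfs along $\gamma_0$ and $\gamma_1$ may be achieved at different times. I would first show that one may restrict to curves with $\mathrm{dist}(\gamma(t),K)\to0$ as $t\to\infty$ (a genuine limit). Finite cost forces $\int V_2(\gamma)<\infty$, and $V_2$ is uniformly continuous on bounded sets while $\gamma$ is $\frac12$-Hölder by the energy bound. Hence $V_2(\gamma(t))\to0$, and by compactness together with continuity of $\nabla f$, every cluster point of $\gamma$ lies in $K$, so $\mathrm{dist}(\gamma(t),K)\to0$. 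With this in hand, $\gamma_\lambda\in\Gamma_{x_\lambda}$ and $f(x_\lambda)\le(1-\lambda)f(x_0)+\lambda f(x_1)$.

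The lower bound $f(x)-c\le\mathcal J(\gamma)$ comes from the chain rule and Young's inequality:
\[
f(x)-f(\gamma(T))=-\int_0^T\nabla f(\gamma)\cdot\dot\gamma\,dt\le\int_0^T\|\nabla f(\gamma)\|\|\dot\gamma\|\,dt\le\int_0^T\tfrac12\bigl(\|\dot\gamma\|^2+V_2(\gamma)\bigr)dt .
\]
Choose $T=t_n$ with $\gamma(t_n)$ converging to a point of $K$; this is possible by boundedness and the liminf condition. Continuity of $f$ and $f|_K\equiv c$ then give $f(\gamma(t_n))\to c$. The upper bound uses the gradient orbit $\gamma_x$ itself. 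Along it $\|\dot\gamma\|^2=V_2(\gamma)=\|\nabla f(\gamma)\|^2$, so Young's inequality is an equality and $f(x)-f(\gamma_x(t))=\mathcal J$ on $[0,t]$. The orbit is bounded by coercivity. By Lemma~\ref{lem.ar}, along any $t_n\to\infty$ a subsequence converges to some $p_x\in K$, so $\gamma_x\in\Gamma_x$. Since $f(\gamma_x(t))$ is nonincreasing with limit $c$, we get $\mathcal J(\gamma_x)=f(x)-c$.

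I expect the main obstacle to be the closure of $\Gamma$ under convex combinations, that is, upgrading the liminf condition to a genuine limit as sketched above. The other steps are routine. A fallback is to truncate: replace $\gamma_i$ on $[T,\infty)$ by the constant point $\pi_K(\gamma_i(T))$, the nearest point of the closed convex set $K$. This costs zero beyond $T$ since $V_2=0$ on $K$, and the connecting jump can be bridged by a short segment whose cost tends to $0$ as the distance to $K$ does, because $V_2$ is bounded near $K$.
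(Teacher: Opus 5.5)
Your proof is correct. It runs on the same core inequality as the paper's proof but organizes it differently.

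\textbf{Comparison with the paper.} The paper fixes the two gradient orbits $\gamma_x,\gamma_y$ and shows that $D(t)=\tfrac12\bigl(f(\gamma_x(t))+f(\gamma_y(t))\bigr)-f(\mu(t))$ is nonincreasing. Unwinding $D'\le 0$ gives exactly
\[
f(\mu(0))-f(\mu(t))\le \tfrac12\int_0^t\bigl(V_2(\gamma_x(\tau))+V_2(\gamma_y(\tau))\bigr)\,d\tau ,
\]
which is your action comparison at $\lambda=\tfrac12$ along the interpolated curve. The paper gets it by bounding $\|\nabla f(\mu)\|$ and $\|\dot\mu\|$ separately by the same square root (Cauchy--Schwarz). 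You instead use Young's inequality together with the joint convexity of $(y,v)\mapsto\tfrac12(\|v\|^2+V_2(y))$. The endpoint step is also the same in substance: $D(t_n)\to 0$ comes from Lemmas~\ref{lem.ar} and~\ref{lem.crit}.

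What your packaging buys is a representation formula: $f-c=f-\min f$ is the value function of an infinite-horizon problem whose Lagrangian $\tfrac12\|v\|^2+\tfrac12V_2(y)$ is the Legendre dual of the Hamilton--Jacobi equation $\tfrac12\|\nabla f\|^2=\tfrac12V_2$. This explains conceptually why the criterion holds: an inf-projection of a jointly convex action is convex. It also treats all $\lambda\in[0,1]$ at once.

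What the paper's Lyapunov formulation buys is economy and portability. It needs no admissible class and no infimum formula. It is also the form the paper carries over to Theorem~\ref{thm.conti-case}. There, exact gradient orbits are replaced by $\varepsilon$-almost descent curves, and the endpoint is handled without compactness or $\mathrm{Crit}(f)$, via the $t_nh(t_n)\to0$ device of Claim~\ref{claim.tn}.

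\textbf{The obstacle you single out.} It does not really arise. You have already shown that the gradient orbits attain the infimum, so you may take $\gamma_0=\gamma_{x_0}$ and $\gamma_1=\gamma_{x_1}$ exactly. For these curves, Lemma~\ref{lem.ar} already gives $\mathrm{dist}(\gamma_{x_i}(t),K)\to 0$. Equivalently, you can extract a common subsequence along which both orbits converge into $K$, as the paper does. Convexity of $\mathrm{dist}(\cdot,K)$ then finishes the argument.

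Your Barbalat-type upgrade from $\liminf$ to $\lim$ for bounded finite-cost curves is nevertheless correct. The truncation fallback is unnecessary.
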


\begin{proof}
Fix $x, y \in \R^d$.
Denote by $\gamma_x, \gamma_y : [0, + \infty) \to \R^d$ the gradient flows starting at $x$ and $y$, respectively, that is, 
\begin{align*}
    \begin{dcases}
        \dot{\gamma_x}(t) = - \nabla f(\gamma_x(t)), \,\, t > 0
        \\
        \gamma_x(0) = x
    \end{dcases}
    \quad \text{ and } \quad
    \begin{dcases}
        \dot{\gamma_y}(t) = - \nabla f(\gamma_y(t)), \,\, t > 0 \\
        \gamma_y(0) = y
    \end{dcases}.
\end{align*}
Set
\begin{align*}
   \mu(t) := & \, \dfrac{\gamma_x(t) + \gamma_y(t)}{2}, \\
   D(t) := & \, \dfrac{1}{2} \Big( f(\gamma_x(t)) + f(\gamma_y(t)) \Big) - f(\mu(t)).
\end{align*}

We first prove that the function $t\mapsto D(t)$ is nonincreasing. 
First observe that 
\begin{equation}\label{Gflow-id}
    \dfrac{d}{dt} f(\gamma_z(t)) = \langle \nabla f(\gamma_z(t)), \dot \gamma_z(t) \rangle = - \| \nabla f(\gamma_z(t)) \|^2 = - V_2(\gamma_z(t)) \quad \text{for $z \in \{ x, y \}$ and $t > 0$.}
\end{equation}
It follows that
\begin{equation}\label{D1}
    D'(t) = - \dfrac{1}{2} \Big( V_2(\gamma_x(t)) + V_2(\gamma_y(t)) \Big) + \left\langle \nabla f(\mu(t)), \dfrac{\nabla f(\gamma_x(t)) + \nabla f(\gamma_y(t))}{2} \right\rangle.
\end{equation}
On the one hand, using the assumption that $V_2$ is convex, we obtain
\begin{equation}\label{D2}
    \| \nabla f(\mu(t)) \| = \sqrt{V_2(\mu(t))} \leq \sqrt{\dfrac{V_2(\gamma_x(t)) + V_2(\gamma_y(t))}{2}}.
\end{equation}
On the other hand, using the convexity of the map $w \mapsto \| w \|^2$, we have
\begin{equation}\label{D3}
    \left\|   \dfrac{\nabla f(\gamma_x(t)) + \nabla f(\gamma_y(t))}{2} \right\| \leq \sqrt{\dfrac{V_2(\gamma_x(t)) + V_2(\gamma_y(t))}{2}}.
\end{equation}
Combining \eqref{D1}, \eqref{D2} and \eqref{D3}, we conclude that $D' \leq 0$ and hence it is nonincreasing.
It follows that
\begin{equation}\label{descent_D}
    D(0) \geq D(t) \quad \text{ for every } t \geq 0.
\end{equation}

\medskip

We next prove that there exists a sequence $\{ t_n \}$ such that $\lim_{n \to \infty} D(t_n) = 0$. 
Applying Lemma~\ref{lem.ar} for $w = x$ and subsequently, for $w = y$, we obtain a sequence $\{ t_n \}_{n\geq 1}$ and $p_x, p_y \in \mathrm{Crit}(f) = \{ \nabla f = 0 \}$ such that
\[
    \gamma_x(t_n) \underset{n \to \infty}{\longrightarrow} \,p_x \quad \text{ and } \quad \gamma_y(t_n) \underset{n \to \infty}{\longrightarrow} \,p_y.
\]
Consequently, thanks to the continuity of $f$ and the fact that $f$ is constant on the convex set $\mathrm{Crit}(f)$ (see Lemma~\ref{lem.crit}), we deduce
\[
    \lim_{n \to \infty} D(t_n) = \dfrac{1}{2} (f(p_x) + f(p_y)) - f \left( \dfrac{p_x + p_y}{2} \right) = 0.
\]

\medskip

To conclude, combining this with~\eqref{descent_D}, we get $D(0) \geq \lim_{n \to \infty} D(t_n) = 0$.
Therefore, 
\begin{align*}
    f \left( \dfrac{x + y}{2} \right) \leq \dfrac{1}{2} (f(x) + f(y)).
\end{align*}
Since $f$ is continuous, we infer that $f$ is convex, which completes the proof.
\end{proof}

\subsection{General case: detecting convexity from the slope in Banach spaces} \label{ss-2.2}

In this subsection we shall prove the general case: $f$ is merely assumed continuous and bounded from below and we work in an arbitrary Banach space. \smallskip\newline
We shall first need the following result.
\begin{lemma}\label{lem.slope_upperbound}
    Let $X$ be a Banach space and let $f \in \mathcal{C}(X)$ be such that $s_f \in \mathcal{C}(X)$.
    Let $\xi: [a, b] \to X$ be an absolutely continuous curve.
    Then, it holds
    \begin{equation}\label{slope_upper}
        f(\xi(s)) - f(\xi(t)) \leq \int_s^t s_f(\xi(\tau)) \| \dot{\xi}(\tau) \|d\tau, \quad \text{ for every } a \leq s \leq t \leq b.
    \end{equation}
\end{lemma}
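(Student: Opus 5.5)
We prove the inequality by a Dini-derivative argument for the one-variable function $\varphi(\tau) := f(\xi(\tau))$ on $[a,b]$. Set
\[
\psi(\tau) := \int_a^\tau s_f(\xi(r))\,\|\dot\xi(r)\|\,dr .
\]
This is well defined: $s_f$ is continuous and $\xi([a,b])$ is compact, so $s_f\circ\xi$ is bounded, and $\|\dot\xi\|\in L^1$. The claim is then that $\tau\mapsto \varphi(\tau)+\psi(\tau)$ is nondecreasing on $[a,b]$, which gives \eqref{slope_upper} directly. Both $\varphi$ and $\psi$ are continuous: $\varphi$ by continuity of $f$ and $\xi$, and $\psi$ as an indefinite integral of an $L^1$ function.

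\textbf{Step 1 (pointwise estimate).} Fix $\tau\in[a,b)$ at which $\xi$ is differentiable; this holds for almost every $\tau$, since absolutely continuous curves into $X$ are differentiable a.e. with $\|\dot\xi\|$ integrable, at least when $X$ has the Radon–Nikodym property. For general $X$ we work instead with the metric derivative $|\dot\xi|(\tau)=\lim_{h\to0}\|\xi(\tau+h)-\xi(\tau)\|/|h|$, which exists a.e. for any absolutely continuous curve into a metric space. For small $h>0$ with $\xi(\tau+h)\neq\xi(\tau)$ write
\[
\frac{\varphi(\tau)-\varphi(\tau+h)}{h}\le \frac{\max\{f(\xi(\tau))-f(\xi(\tau+h)),0\}}{\|\xi(\tau)-\xi(\tau+h)\|}\cdot\frac{\|\xi(\tau+h)-\xi(\tau)\|}{h};
\]
when $\xi(\tau+h)=\xi(\tau)$ the left side is $0$. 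Taking $\limsup_{h\downarrow0}$ and using the definition of $s_f$ yields
\[
\liminf_{h\downarrow0}\frac{\varphi(\tau+h)-\varphi(\tau)}{h}\ge -\,s_f(\xi(\tau))\,\|\dot\xi(\tau)\| .
\]
At Lebesgue points of the integrand, $\psi$ is differentiable with $\psi'(\tau)=s_f(\xi(\tau))\|\dot\xi(\tau)\|$. Hence the lower right Dini derivative of $\varphi+\psi$ is $\ge 0$ for a.e. $\tau$.

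\textbf{Step 2 (from a.e. to monotonicity).} This is the main obstacle. A continuous function with a.e. nonnegative Dini derivative need not be nondecreasing (Cantor-type counterexamples), and $\varphi$ is not known to be absolutely continuous. I would first try to establish the one-sided bound at every point, with an extra $\varepsilon$-slack, and then invoke the classical monotonicity lemma: a continuous function whose lower right Dini derivative is $>-\varepsilon$ everywhere, apart from a countable set, is nondecreasing up to the linear correction $\varepsilon\tau$. Continuity of $s_f$ is what should make this work. Given $\tau$ and $\varepsilon>0$, continuity gives $\delta>0$ such that $s_f\le s_f(\xi(\tau))+\varepsilon$ on the ball $B(\xi(\tau),\delta)$. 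By the definition of the slope and a standard argument (e.g. via Ekeland's principle, or the known fact that a continuous slope makes $f$ locally Lipschitz with constant $\sup s_f$ along segments), we then expect
\[
f(u)-f(v)\le \bigl(s_f(\xi(\tau))+\varepsilon\bigr)\|u-v\| \quad\text{for } u,v\in B(\xi(\tau),\delta').
\]
With this local Lipschitz bound, $\varphi$ is absolutely continuous whenever $\xi$ is. Indeed, cover $\xi([a,b])$ by finitely many such balls and apply the Lipschitz bound on subintervals. Then $\varphi(t)-\varphi(s)=\int_s^t\varphi'$, and Step 1 gives $\varphi'\ge -s_f(\xi)\|\dot\xi\|$ a.e., which concludes.

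\textbf{Key sub-step: the local Lipschitz bound.} I would prove it by contradiction. Suppose $f(u)-f(v)>L\|u-v\|$ with $L=\sup_{B}s_f+\varepsilon$. Apply Ekeland's variational principle to $f+L\|\cdot-v\|$ restricted to a closed ball around $v$. This produces a point where the slope of $f$ is at least $L$ minus a small parameter, contradicting $s_f\le L-\varepsilon$ there. The care needed is to ensure the Ekeland point stays in the interior of the ball, and this is where the choice $\delta'\ll\delta$ is used.
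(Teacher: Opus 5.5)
Your architecture differs from the paper's and can be made to work, but the step that carries all the weight, the local Lipschitz bound, is not correctly argued. Applying Ekeland's principle to $f+L\|\cdot-v\|$ gives a point $w$ with $f(w)-f(z)\le (L+\alpha)\|z-w\|$ for all $z$ in the ball. That is an \emph{upper} bound on the descent rate at $w$; along $[w,v]$ it only shows that $f$ \emph{increases} at rate $L-\alpha$. So it can never contradict $s_f\le L-\varepsilon$.

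In fact no argument that uses only lower semicontinuity of $f$ can work. Take the lower semicontinuous example in the remark after Theorem~\ref{thm.conti-case} ($f=x^2$ off $0$, $f(0)=-1$): its slope is bounded near $0$, yet $f$ is not even continuous. What you need is upper semicontinuity of $f$, used with the opposite sign. On the compact segment $[u,v]$, minimize the continuous function $g(z):=-f(z)+L\|z-v\|$. If $f(u)-f(v)>L\|u-v\|$, then $g(u)<g(v)$, so a minimizer $w$ differs from $v$. For $z\in[w,v]$ one then gets $f(w)-f(z)\ge L(\|w-v\|-\|z-v\|)=L\|z-w\|$, hence $s_f(w)\ge L$. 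This contradicts $\sup s_f<L$ on a convex neighbourhood containing $u,v$, and needs neither Ekeland nor any interiority care.

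The ``known fact'' you offer as an alternative is the paper's Corollary~\ref{cor-ola}. That corollary is deduced \emph{from} this lemma, so citing it here would be circular.

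With this repair your proof is complete. The paper resolves your Step~2 obstacle by your first idea, the everywhere one-sided bound plus the monotonicity lemma. It first reparametrizes $\xi$ by arc length to a $1$-Lipschitz curve $\eta$. Then $\|\eta(t+h)-\eta(t)\|\le h$ holds at \emph{every} $t$, and the integrand $s_f\circ\eta$ is continuous. So the definition of $s_f$ at $\eta(t)$ gives $D_+F(t)\ge-\varepsilon$ everywhere for $F(t)=f(\eta(t))+\int_a^t s_f(\eta)$. The classical monotonicity theorem and the change of variables $\rho=\ell(\tau)$ finish.

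Your route goes instead through the Lipschitz bound, absolute continuity of $f\circ\xi$, and integration of the a.e.\ estimate of Step~1. That estimate is correct as you wrote it with the metric derivative. Your route avoids the reparametrization and uses only local boundedness of $s_f$ (its Borel measurability is automatic for continuous $f$), so it proves slightly more. The paper's argument is shorter and obtains Lipschitz continuity as a consequence rather than needing it as input.
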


\begin{proof}
First, consider the case in which $\xi: [a, b] \to X$ a $1$--Lipschitz curve.
In this case, the inequality~\eqref{slope_upper} can be derived by combining \cite[Proposition 4.5]{LSZ_2021} and \cite[Proposition 2.6]{GHN_2015}, via the theory of eikonal equations in metric spaces. 
In the sequel, however, we give a self-contained and direct proof based on the lower  Dini derivative, which avoids the machinery of viscosity solutions in general metric spaces.

\medskip

Consider the function 
\[
    F(t) := f (\xi(t)) + \int_a^t s_f(\xi(\tau)) \, d\tau, \,\, \text{ for } t \in [a, b].
\]
Recall that its \textit{lower right Dini derivative} at $t \in (a, b)$ is defined by:
\[
    D_+ F(t) := \liminf_{h \to 0_+} \dfrac{F(t + h) - F(t)}{h}. 
\]
Fix $t \in (a, b)$ and $\varepsilon > 0$. 
By the definition of slope, there exists $\delta > 0$ such that
\[
    f(\xi(t)) - f(y) \leq (s_f(\xi(t)) + \varepsilon) \| \xi(t) - y \| \quad \text{ for every } y \in B_\delta(\xi(t)).
\]
Using the fact that $\xi$ is 1--Lipschitz, we have, for every  $0 <  h < \min \{\delta, b - t\}$,
\[
    f(\xi(t + h)) - f(\xi(t)) \geq - (s_f(\xi(t)) + \varepsilon) \| \xi(t) - \xi(t + h) \| \geq -  (s_f(\xi(t)) + \varepsilon)h.
\]
Therefore, we obtain
\[
    \dfrac{F(t + h) - F(t)}{h} \geq -  (s_f(\xi(t)) + \varepsilon) + \dfrac{1}{h} \int_t^{t + h} s_f(\xi(\tau)) \,d\tau \quad \text{ for every } 0 < h < \min \{ \delta, b  - t \}.
\]
The continuity of $s_f \circ \xi$ then yields
\[
    D_+ F(t) \geq - (s_f(\xi(t)) + \varepsilon) + s_f(\xi(t)) = - \varepsilon.
\]
Since $\varepsilon > 0$ and $t \in (a, b)$ are chosen arbitrarily, we infer that $D_+ F(t) \geq 0$ for every $t \in (a, b)$.
Therefore $F$ is nondescreasing in $[a, b]$ (see e.g. \cite[Chapter 11, Corollary 4.2]{B_94} or \cite{HT_2006}).
It follows that, for any $a \leq s \leq t \leq b$
\[  
    f(\xi(s)) - f( \xi(t)) \leq \int_a^t s_f (\xi(\tau)) \,d\tau -  \int_a^s s_f(\xi(\tau)) \,d\tau = \int_s^t s_f(\xi(\tau)) \,d\tau.
\]

\medskip

It remains to prove~\eqref{slope_upper} for an arbitrary absolutely continuous
curve $\xi: [a, b] \to X$.
Define the arc-length function
\[
    \ell(r) := \int_a^r \|\dot\xi(\tau)\| \,d\tau, \quad \text{ for } r\in[a,b],
\]
and set $L := \ell(b)$.
The case $L = 0$ is vacuous. 
Assume that $ L>0 $.
Since $\xi$ is absolutely continuous, $\ell$ is continuous, nondecreasing and satisfies $ \ell'(r)=\|\dot\xi(r)\|$ for a.e. $r \in [a,b]$.
Since $\ell(r_1)=\ell(r_2)$ implies $\xi(r_1)=\xi(r_2)$, the curve
$\eta:[0,L]\to X$ defined by
\[
    \eta(\ell(r)) := \xi(r), \quad \text{  for } r\in[a,b],
\]
is well-defined. 
Furthermore, if $0\le \rho_1\le \rho_2\le L$, we choose $r_1,r_2\in[a,b]$ such that $\ell(r_i) = \rho_i$ for $i \in \{1,2\}$.
Since \(\ell\) is nondecreasing, we may take \(r_1\le r_2\) and so
\[
    \|\eta(\rho_2)-\eta(\rho_1)\|
    =
    \|\xi(r_2)-\xi(r_1)\|
    \le
    \int_{r_1}^{r_2}\|\dot\xi(\tau)\|\,d\tau
    =
    \rho_2-\rho_1.
\]
Thus \(\eta\) is \(1\)-Lipschitz.
Applying the result already proved for $1$-Lipschitz curves, we obtain,
for  any $a\le s\le t\le b$,
\begin{align*}
    f(\xi(s))-f(\xi(t)) = f(\eta(\ell(s))) - f(\eta(\ell(t)) )
    \leq &
    \int_{\ell(s)}^{\ell(t)} s_f(\eta(\rho))\,d\rho \\
    = & \int_s^t s_f(\eta(\ell(\tau)))\ell'(\tau)\,d\tau 
     = 
    \int_s^t s_f(\xi(\tau))\|\dot\xi(\tau)\|\,d\tau,
\end{align*}
where we have used that $\ell'(\tau) = \| \dot{\xi} (\tau) \|$ for a.e $\tau \in [a, b]$.
This completes the proof.
\end{proof}

Before we proceed, let us register the following immediate consequence of Lemma~\ref{lem.slope_upperbound}.
\begin{corollary}\label{cor-ola}
Let $X$ be a Banach space, $f \in \mathcal{C}(X)$ and $s_f \in \mathcal{C}(X)$. Assume that $K$ is a nonempty convex subset of $X$ and $s_f$ is bounded on $K$ by a constant $M>0$. Then $f$ is $M$-Lipschitz on $K$.
\end{corollary}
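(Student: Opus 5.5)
The plan is to apply Lemma~\ref{lem.slope_upperbound} to straight segments, which stay inside $K$ by convexity. Fix $x,y\in K$ and let $\xi:[0,1]\to X$ be given by $\xi(\tau):=x+\tau(y-x)$.

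First, $\xi$ is affine, hence Lipschitz and in particular absolutely continuous. Its derivative is $\dot\xi(\tau)=y-x$ for every $\tau$, so $\|\dot\xi(\tau)\|=\|y-x\|$ is constant.

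Second, since $K$ is convex, $\xi(\tau)\in K$ for all $\tau\in[0,1]$. Hence $s_f(\xi(\tau))\le M$ along the whole segment.

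Third, we invoke \eqref{slope_upper} with $[a,b]=[0,1]$, $s=0$ and $t=1$. The hypotheses $f,s_f\in\mathcal{C}(X)$ are exactly those of Lemma~\ref{lem.slope_upperbound}. This gives
\[
f(x)-f(y)\le \int_0^1 s_f(\xi(\tau))\,\|y-x\|\,d\tau\le M\|x-y\|.
\]

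Finally, we exchange the roles of $x$ and $y$, using the reversed segment $\tau\mapsto y+\tau(x-y)$, which also lies in $K$. This yields $f(y)-f(x)\le M\|x-y\|$. Combining the two inequalities gives $|f(x)-f(y)|\le M\|x-y\|$, so $f$ is $M$-Lipschitz on $K$.

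There is no real obstacle here. The only point needing care is that the lemma bounds only the decrease $f(\xi(s))-f(\xi(t))$ in one direction, and the reversal of the segment handles this.
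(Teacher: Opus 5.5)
Your proposal is correct and is essentially the paper's proof: both apply \eqref{slope_upper} to the segment joining the two points, which lies in $K$ by convexity, and then exchange $x$ and $y$. The only difference is that the paper uses the unit-speed parametrization $\xi(t)=y+t\frac{x-y}{\|x-y\|}$ on $[0,\|x-y\|]$, whereas you use $[0,1]$ with constant speed $\|y-x\|$; this is equally covered, since Lemma~\ref{lem.slope_upperbound} holds for any absolutely continuous curve.
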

\begin{proof}
Given $x,y\in K$, apply~\eqref{slope_upper} for the $1$-Lipschitz curve 
$\xi(t):= y + t \,(\frac{x-y}{\|x-y\|})$, \,\,$t\in \big[\, 0, \|x-y\|\,\big]$. 
\end{proof}

\medskip

We now recall a result from the theory of eikonal equations on complete length spaces. Roughly speaking, the result states that, under the Monge condition $s_u = \ell$, one can construct local curves starting from any point along which $u$ decreases almost optimally. We shall use this curve construction in the sequel.
The following statement is a consequence of Proposition~4.8, Proposition~3.5, and Remark~3.6 of \cite{LSZ_2021}.

\begin{proposition}\label{prop.monge-and-curve}
Let $(X, d)$ be a complete length space and $\Omega \subset X$ an open set.
Assume that $\ell$ is locally uniformly continuous on $X$ and $\inf_\Omega \ell > 0$.
Assume that $u$ is locally Lipschitz on $\Omega$ such that 
\begin{equation}\label{Monge}
	s_u(x) = \ell(x) \quad \text{ for every } x \in \Omega.
\end{equation}
Then, for every $x \in \Omega$, there exists a sufficiently small $\delta > 0$  such that for every $\eta > 0$, there exists a 1--Lipschitz curve $\xi : [0, + \infty) \to  X$ satisfying $\xi(0) = x$ and
\[
	u(x) \geq \int_0^r \ell(\xi (\rho))\, d\rho + u(\xi (r)) - \eta (1 + r), \quad \text{ for every }  0 \leq r \leq R,
\]
where $R > 0$ denotes the exit time of $\xi $ from $B_\delta(x)$, that is, 
\[
	R := \inf \, \big\{ r \geq 0 : \xi (r) \not\in B_\delta(x) \big\}.
\]
\end{proposition}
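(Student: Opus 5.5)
The plan is to build $\xi$ by concatenating short, almost steepest-descent pieces. The error will be tracked so that the accumulated defect is at most $\eta(1+r)$, and the construction will continue until the curve leaves $B_\delta(x)$.

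\textbf{Choice of $\delta$.} Pick $\delta>0$ so that on $B_{2\delta}(x)\subset\Omega$ the function $u$ is $L$-Lipschitz and $\ell$ is uniformly continuous, with $\ell\ge c:=\inf_\Omega\ell>0$. This $\delta$ depends only on $x$, not on $\eta$. Now fix $\eta>0$ and set $\varepsilon:=\eta/4$. Let $\omega$ be a modulus of continuity of $\ell$ on $B_{2\delta}(x)$, and choose $\sigma>0$ with $\omega(2\sigma)\le\varepsilon$.

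\textbf{Elementary step.} Let $z\in B_\delta(x)$. Since $s_u(z)=\ell(z)$ is a $\limsup$, there are points $y$ arbitrarily close to $z$ with
\[
u(z)-u(y)\ge(\ell(z)-\varepsilon)\,d(z,y).
\]
Because $X$ is a length space, $z$ and $y$ can be joined by a $1$-Lipschitz curve of length $\lambda\le(1+\varepsilon')d(z,y)$, where $\varepsilon'$ is chosen small relative to $\varepsilon$ and to $\sup\ell$ on the ball. If $d(z,y)<\sigma$, uniform continuity gives $\ell\le\ell(z)+\varepsilon$ along this piece. Hence the piece $\zeta:[0,\lambda]\to X$ satisfies
\[
u(z)\ \ge\ \int_0^\lambda \ell(\zeta(\rho))\,d\rho + u(\zeta(\lambda)) - \eta\lambda .
\]
This inequality is additive under concatenation, which is why the target contains the term $\eta(1+r)$: the extra $\eta$ is slack reserved for the limit steps below.

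\textbf{Iteration.} Let $\xi_0:=x$. At each stage, take $\rho(z):=\sup\{d(z,y): y \text{ admissible as above with } d(z,y)<\sigma\}>0$, and choose a step of length at least $\tfrac12\rho(z)$, halving the step if necessary so as not to overshoot the exit badly. Concatenate these steps.

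Every step decreases $u$ by at least $(c-\varepsilon)$ times its length. Since $u$ is bounded on $B_{2\delta}(x)$, the total length spent inside $B_\delta(x)$ is finite, bounded by $2L\delta/(c-\varepsilon)$. So there are only two possibilities. Either the curve exits $B_\delta(x)$ after finitely many steps, or the step lengths are summable and the curve converges (by completeness) to a point $z_\infty$.

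In the second case, continuity of $u$ and $\ell$ shows that the inequality passes to the limit, with at most an extra error that can be absorbed. We then restart from $z_\infty$. To organize the restarts, consider the set of lengths reachable while keeping the defect at most $\eta r+\eta\cdot(\text{slack})$. A transfinite, or equivalently open-and-closed, argument shows that this set contains every length up to the exit time $R$, which is finite by the length bound. Beyond $R$, extend $\xi$ arbitrarily as a $1$-Lipschitz curve, for instance a constant one.

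\textbf{Main obstacle.} I expect the hardest part to be making sure the construction actually reaches $\partial B_\delta(x)$. The radii at which the $\limsup$ in the slope is nearly attained are not uniform in $z$, so the steps may shrink and accumulate. This is handled by two ingredients: the condition $\inf\ell>0$, which forces a definite decrease of $u$ per unit length, and the limit-and-restart argument. The uniform continuity of $\ell$ and the completeness of $X$ are used exactly at these limit points.
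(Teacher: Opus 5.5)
Your argument is sound in outline, and it takes a genuinely different route from the paper. The paper does not prove this proposition. It imports it from \cite{LSZ_2021} (Proposition~4.8, Proposition~3.5 and Remark~3.6), i.e.\ from the theory relating Monge solutions of $s_u=\ell$ to curve-based solutions of the eikonal equation in metric spaces. You instead build the curve by hand from the definition of the slope, and each hypothesis has a clear role. The length structure lets you reach a near-optimal descent point along a near-geodesic. Uniform continuity of $\ell$ controls $\int\ell$ along that piece. The bound $\inf_\Omega\ell>0$ forces a definite decrease of $u$ per unit length, hence bounded total length and an actual exit from $B_\delta(x)$. Completeness lets you pass through accumulation points. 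This makes the paper self-contained, in the same spirit as its direct proof of Lemma~\ref{lem.slope_upperbound}; the citation buys brevity and places the statement in the general theory of eikonal equations. Your continuation step can be formalized like the Zorn-lemma argument in Step~2 of Lemma~\ref{lem.m-curve}, or by transfinite recursion on a single curve. Your ``set of reachable lengths'' is not quite the right object, since all lengths must be realized by one and the same curve.

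Two details need repair.

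First, your step inequality is verified only at the endpoint of each piece. The conclusion must hold for every $r\le R$, in particular at the exit time, which typically lies inside a piece. This, rather than the limit stages, is what the extra $\eta\cdot 1$ is for: at limit stages the accumulated inequality passes to the limit exactly, by continuity of $u$ on $B_{2\delta}(x)$. Suppose $r$ lies in a piece ending at $y=\xi(r')$; then $r'-r<2\sigma$. Combining the accumulated inequality at $r'$ with $\ell\ge 0$ and the Lipschitz bound on $u$ gives $u(x)-u(\xi(r))\ge\int_0^r\ell(\xi(\rho))\,d\rho-\eta r-2\sigma(\eta+L)$. So it suffices to choose $\sigma$ (which may depend on $\eta$) with $2\sigma(\eta+L)\le\eta$.

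Alternatively, use $u(\xi(r))-u(y)\le\int_r^{r'}\ell(\xi(\rho))\,d\rho$. This is Lemma~\ref{lem.slope_upperbound}, whose proof works verbatim in metric spaces because $s_u=\ell$ is continuous on $\Omega$. It is exactly how the paper upgrades endpoint estimates in the proof of Corollary~\ref{corol.lower-epsi}.

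Second, your per-step decrease bound needs $\varepsilon<c$, so take $\varepsilon=\min\{\eta/4,c/2\}$ rather than $\eta/4$.
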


We obtain easily from the above the following result, which will be used in the sequel.

\begin{corollary}\label{corol.lower-epsi}
Let $X$ be a Banach space and $f \in \mathcal{C}(X)$. Assume that $s_f$ is locally uniformly continuous. 
Then, for every $x \in X$ with $s_f(x) > 0$, there exists $\delta > 0$ such that for every $\varepsilon > 0$ there exists a $1$--Lipschitz curve $\sigma : [0, + \infty ) \to X$ satisfying $\sigma(0) = x$ and 
\[
	f(x) - f(\sigma(r)) \geq \int_0^r s_f(\sigma(\rho)) \,d\rho - \varepsilon \quad \text{ for every } 0 \leq r \leq R,
\]
where $R$ is the exit time of $\sigma$ from $B_\delta(x)$.
\end{corollary}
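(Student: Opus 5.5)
We will deduce Corollary~\ref{corol.lower-epsi} directly from Proposition~\ref{prop.monge-and-curve}, applied in the Banach space $X$ (which is a complete length space with $d(x,y)=\|x-y\|$) with $u:=f$ and $\ell:=s_f$. The Monge condition~\eqref{Monge} then holds trivially, so the real work is to choose an open set $\Omega$ containing $x$ on which $\inf_\Omega \ell>0$ and $u$ is locally Lipschitz.

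Fix $x$ with $s_f(x)>0$. Since $s_f$ is locally uniformly continuous, in particular continuous, there is $\delta_0>0$ such that $s_f(y)\geq s_f(x)/2>0$ and $s_f(y)\leq s_f(x)+1$ for all $y\in B_{\delta_0}(x)$. Set $\Omega:=B_{\delta_0}(x)$. Then $\inf_\Omega \ell\geq s_f(x)/2>0$. Moreover, $\Omega$ is convex and $s_f$ is bounded on it by $M:=s_f(x)+1$, and $s_f\in\mathcal{C}(X)$, so Corollary~\ref{cor-ola} shows that $f$ is $M$-Lipschitz on $\Omega$, hence locally Lipschitz there. All hypotheses of Proposition~\ref{prop.monge-and-curve} are therefore satisfied.

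The proposition yields $\delta>0$ (which may be shrunk so that $\delta\le\delta_0$, since the conclusion only concerns times before the exit from $B_\delta(x)$) such that for every $\eta>0$ there is a $1$-Lipschitz curve $\xi$ with $\xi(0)=x$ and
\[
f(x)-f(\xi(r))\geq \int_0^r s_f(\xi(\rho))\,d\rho-\eta(1+r),\qquad 0\le r\le R.
\]
The remaining step, which I expect to be the only delicate point, is converting the error $\eta(1+r)$ into a uniform $\varepsilon$. For this we bound $R$. Since $\xi$ is $1$-Lipschitz, on $[0,R]$ the curve stays in $\overline{B_\delta(x)}\subset\overline{\Omega}$, where $s_f\geq s_f(x)/2$ by continuity. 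Using the inequality at $r\le R$ together with the bound $f(x)-f(\xi(r))\le M\|x-\xi(r)\|\le M\delta$ from the Lipschitz estimate (extended to $\overline{\Omega}$ by continuity), we obtain
\[
r\,\frac{s_f(x)}{2}\le M\delta+\eta(1+r).
\]
Taking $\eta\le s_f(x)/4$, this gives $r\le 4(M\delta+\eta)/s_f(x)$. Hence $R\le T:=4(M\delta+1)/s_f(x)$, a bound independent of $\eta\le 1$. Given $\varepsilon>0$, choose $\eta:=\min\{1,s_f(x)/4,\varepsilon/(1+T)\}$ and set $\sigma:=\xi$. Then $\eta(1+r)\le\varepsilon$ for all $0\le r\le R$, which gives the claimed inequality.

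If $R=+\infty$ were allowed a priori, the same estimate applied to every finite $r$ forces boundedness, so $R<\infty$. Alternatively, one notes that the inequality is only required for $r\le R$, and the bound above shows that this range is finite.
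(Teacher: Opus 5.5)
Your argument is correct and follows the paper's strategy. Both apply Proposition~\ref{prop.monge-and-curve} on a small ball where $s_f$ is bounded away from zero and $f$ is Lipschitz by Corollary~\ref{cor-ola}, bound the exit time uniformly in $\eta$, and then take $\eta$ small.

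The differences are minor simplifications. You bound $R$ through the Lipschitz estimate $f(x)-f(\xi(r))\le M\delta$ rather than through $\inf_{B_{\delta'}(x)} f$. You also absorb the error directly via $\eta(1+r)\le\eta(1+T)$ on all of $[0,R]$, and you treat $R=+\infty$ explicitly. The paper instead passes through an infimum over curves and uses Lemma~\ref{lem.slope_upperbound} to recover the inequality at intermediate times. Your uniform bound on $R$ makes that detour unnecessary.
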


\begin{proof}
Since the functions $f$ and $s_f$ are continuous and $s_f(x) > 0$, there exists $\delta' > 0$ such that
\[
	m_f(x):=\inf_{B_{\delta'}(x)}\,f > -\infty \qquad \text{and}\qquad    
    0 < \underbrace{\inf_{B_{\delta'}(x)} s_f}_{=: \, m_x} \leq \underbrace{\sup_{B_{\delta'}(x)} s_f}_{=: \, M_x} < + \infty.
\]
Consequently, thanks to Corollary~\ref{cor-ola}, $f$ is Lipschitz on $B_{\delta'}(x)$ with the Lipschitz constant $M_x$.
Applying Proposition~\ref{prop.monge-and-curve} to the case $\Omega = B_{\delta'}(x)$, $u = f$ and $\ell = s_f$, there exists $\delta \in (0, \delta')$ such that, for every $\eta > 0$, there exists a $1$--Lipschitz curve $\xi: [0, + \infty) \to X$ with $\xi(0) = x$ satisfying
\begin{equation}\label{sig_01}
	f(x) \geq \int_0^r s_f(\xi(\rho)) \, d\rho + f(\xi(r)) - \eta(1 + r), \quad \text{ for every } 0 \leq r \leq R_\xi,
\end{equation}
where $R_\xi > 0$ denotes the exit time of $\xi$ from $B_\delta(x)$.
Since $f$ is bounded from below, we get, for every $\eta \leq \min\{1, m_x/2\}$,
\[
	f(x) - m_f(x) + \eta \geq \int_0^{R_\xi} s_f(\xi(\rho)) \, d\rho - \eta R_\xi \geq (m_x - \eta) R_\xi \geq \dfrac{m_x R_\xi}{2}.	
\]
Hence
\[
	R_\xi \leq \dfrac{2(f(x) - m_f(x) + 1)}{m_x}.	
\]
Since $R_\xi$ is bounded by a quantity that depends only on $f$ and $x$, it follows from~\eqref{sig_01} that
\begin{equation}\label{sig_02}
		f(x) \geq \inf_\xi \left\{ \int_0^{R_\xi}  s_f(\xi(\rho)) \, d\rho + f(\xi(R_\xi)) \right \},
\end{equation}
where the infimum is taken over all $1$--Lipschitz curves $\xi: [0, + \infty) \to X$ with $\xi(0) = x$.
Fix $\varepsilon > 0$.
Then, by~\eqref{sig_02}, there exists a $1$--Lipschitz curve $\sigma: [0, + \infty) \to X$ such that $\sigma(0) = x$ and 
\begin{equation}\label{sig_03}
f(x) \geq \int_0^{R_\sigma} s_f(\sigma(\rho))\, d\rho + f(\sigma(R_\sigma))- \varepsilon.
\end{equation}
For any fixed $r \in (0, R_\sigma)$, applying Lemma~\ref{lem.slope_upperbound} to the case $\xi = \sigma$, we have
\begin{equation}\label{sig_04}
	f(\sigma(r)) - f(\sigma(R_\sigma)) \leq \int_r^{R_\sigma} s_f(\sigma(\rho)) \, d\rho.
\end{equation}
Combining~\eqref{sig_03} and \eqref{sig_04}, we obtain
\[
	f(x) \geq \int_0^r s_f(\sigma(\rho)) \, d\rho + f(\sigma(r)) - \varepsilon, \quad \text{ for every } 0 \leq r \leq R_\sigma,
\]
which completes the proof.
\end{proof}

\begin{lemma}\label{lem.m-curve}
    Let $X$ be a Banach space, $m > 1$ and let $f \in \mathcal{C}(X)$ be bounded from below such that $s_f$ is locally uniformly continuous on $X$.
    Then, for any $x \in X$ and $\varepsilon > 0$, there exists $\gamma_x: [0, + \infty) \to X$ such that $\gamma_x(0) = x$,
    \begin{equation}\label{velo_control}
        \| \dot\gamma_x(t) \| \leq s_f^{m - 1}(\gamma_x(t)) \quad \text{ for a.e $t > 0$,}
    \end{equation}
    and
    \begin{equation}\label{descent}
        f(x) - f(\gamma_x(t)) \geq \int_0^t s_f^m(\gamma_x(\tau)) \,d\tau - \varepsilon, \quad \text{ for every } t > 0.
    \end{equation}
\end{lemma}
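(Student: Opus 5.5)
We build $\gamma_x$ by concatenating the local near-optimal curves of Corollary~\ref{corol.lower-epsi}, each reparametrized so that its speed equals $s_f^{m-1}$. Then we run a transfinite (or Zorn-type) continuation argument to reach all times $t>0$. The errors are controlled by a summable sequence $\varepsilon_k=\varepsilon 2^{-k}$.

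\textbf{Case $s_f(x)=0$.} The constant curve $\gamma_x\equiv x$ works. Indeed, \eqref{velo_control} holds trivially, and \eqref{descent} reads $0\ge 0-\varepsilon$. More generally, whenever the construction reaches a point where $s_f=0$, we stop there and continue with the constant curve.

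\textbf{Case $s_f(x)>0$.} Corollary~\ref{corol.lower-epsi} gives $\delta>0$ and a $1$-Lipschitz curve $\sigma$ with
\[
f(x)-f(\sigma(r))\ge\int_0^r s_f(\sigma(\rho))\,d\rho-\varepsilon_1 \quad\text{for } r\le R.
\]
Shrinking $\delta$, we may assume $s_f\ge m_x>0$ on $B_\delta(x)$. We then reparametrize by the time change $\theta$ defined by $\theta'(t)=s_f^{m-1}(\sigma(\theta(t)))$, $\theta(0)=0$. This ODE is well posed in the Carathéodory sense: the right-hand side is continuous in $\theta$ and bounded between $m_x^{m-1}$ and $M_x^{m-1}$. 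Set $\gamma(t)=\sigma(\theta(t))$. Then $\|\dot\gamma\|\le\theta'=s_f^{m-1}(\gamma)$, and the substitution $\rho=\theta(\tau)$ gives
\[
\int_0^{r}s_f(\sigma(\rho))\,d\rho=\int_0^{t}s_f(\gamma(\tau))\,s_f^{m-1}(\gamma(\tau))\,d\tau=\int_0^t s_f^m(\gamma)\,d\tau .
\]
So \eqref{descent} holds with error $\varepsilon_1$ on a time interval $[0,T_1]$ with $T_1>0$.

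\textbf{Concatenation.} From $\gamma(T_1)$ we repeat with $\varepsilon_2$, and so on. The errors add up to at most $\varepsilon$, and inequality \eqref{descent} is additive along the concatenation. Since $f$ is bounded below, \eqref{descent} gives $\int_0^{T}s_f^m(\gamma)\le f(x)-\inf f+\varepsilon$ uniformly in $T$.

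\textbf{Main obstacle.} The construction could accumulate at a finite time $T_\infty=\sup_k T_k<\infty$. To pass the limit, note that by H\"older,
\[
\int\|\dot\gamma\|\le\int s_f^{m-1}(\gamma)\le\Big(\int s_f^m\Big)^{(m-1)/m}\,T_\infty^{1/m}<\infty .
\]
Hence $\gamma(T_k)$ is Cauchy and converges, by completeness of $X$. We extend $\gamma$ to $T_\infty$ by continuity. Then \eqref{descent} at $T_\infty$ follows from continuity of $f$, with the integral handled by monotone convergence. We restart from $\gamma(T_\infty)$ with the remaining error budget.

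To turn this into an argument for all $t>0$, we use a maximality argument: consider the set of pairs $(T,\gamma|_{[0,T]})$ satisfying \eqref{velo_control} and \eqref{descent} with error at most $\varepsilon(1-2^{-j})$ for some bookkeeping index $j$, ordered by extension. Zorn's lemma, combined with the limit step above, gives a maximal element. If its time $T$ were finite, the local step would extend it, contradicting maximality; so $T=\infty$.

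The delicate point is the error budget in the transfinite process. Countably many steps consume at most $\varepsilon$, but a maximal chain might require uncountably many restarts. I would avoid this by choosing each local error proportional to the time gained, i.e.\ of order $\varepsilon\,2^{-\lceil T\rceil}\,(T_{k+1}-T_k)$. This is allowed because Corollary~\ref{corol.lower-epsi} lets $\varepsilon$ be arbitrary while $\delta$ is independent of $\varepsilon$, so the length of the local step is bounded below independently of the error chosen. The total error is then bounded by a convergent integral, at most $\varepsilon$.
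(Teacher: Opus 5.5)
Your proposal is correct and follows essentially the paper's route: a local step that reparametrizes the near-optimal $1$-Lipschitz curve of Corollary~\ref{corol.lower-epsi} by the time change $\theta'=s_f^{m-1}(\sigma\circ\theta)$, then a Zorn continuation argument with the H\"older/completeness limit at a finite maximal time and constant continuation at critical points. The error bookkeeping you settle on in your last paragraph is exactly the paper's device of a continuous strictly increasing budget $\kappa$ with $\kappa(+\infty)=\varepsilon$, measured at the right endpoint $T$ of the domain. That bookkeeping uses local error proportional to the time gained, which is legitimate because $\delta$, and hence the local step length, does not depend on the error. Your earlier poset with error at most $\varepsilon(1-2^{-j})$ should be discarded, since taken literally its chains need not have upper bounds.
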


\begin{proof}
The case $s_f(x) = 0$ is vacuous.
Assume that $s_f(x) > 0$.
We split the proof into two steps.

\medskip

\textbf{Step 1:} \textit{Local existence.}
Fix $x \in X$ and $\varepsilon > 0$.
We prove that there exist $T_{x} > 0$ depending only on $x$ and a curve $\gamma_x: [0, T_x) \to X$ such that
\begin{equation}\label{velo_local}
        \| \dot\gamma_x(t) \| \leq s_f^{m - 1}(\gamma_x(t)) \quad \text{ a.e $t \in (0, T_x)$,}
    \end{equation}
 and
    \begin{equation}\label{descent-local}
        f(x) - f(\gamma_x(t)) \geq \int_0^t s_f^m(\gamma_x(\tau)) \, d\tau - \varepsilon, \quad \text{ for every } t \in (0, T_x).
    \end{equation}

\medskip

First, let $\delta > 0$ be defined as in Corollary~\ref{corol.lower-epsi}.
Notice that
\[
0 < \underbrace{\inf_{y \in B_\delta(x)} s_f(y)}_{:=\,m_x} \leq \underbrace{\sup_{y \in B_\delta(x)} s_f(y)}_{:=\,M_x} < + \infty.
\]
Therefore, thanks to Corollary~\ref{corol.lower-epsi}, there exists a curve $\sigma_x: [0, +\infty) \to X$  with $\sigma_x(0) = x$ such that $\| \dot \sigma_x(r) \| \leq 1$ a.e $r > 0$ and
\begin{equation}\label{eq:2.10}
    f(x) - f(\sigma_x(r)) \geq \int_0^r s_f(\sigma_x(\rho)) \, d\rho - \varepsilon \quad \text{ for every } r \in (0, R_x).
\end{equation}
Here, $R_x$ denotes the exit time of $\sigma_x$ from $B_\delta(x)$:
\[
    R_x := \inf\big\{ r \geq 0 : \sigma_x(r) \not\in B_\delta(x) \big\}.
\]
Since $\sigma_x$ is $1$--Lipschitz, $R_x \geq \delta$.
Define
\[
    \varphi(r)
    := \int_0^r \dfrac{1}{s_f^{m - 1}(\sigma_x(\rho))} \,d\rho,
    \qquad r \in [0,R_x).
\]
Since $\inf_{r \in [0, R_x)} s_f(\sigma_x(r)) \geq m_x>0$, $\varphi$ is strictly increasing and $\mathcal C^1$ on $(0, R_x)$ with
\[
    \varphi'(r) = \dfrac{1}{s_f^{m - 1}(\sigma_x(r))} \quad \text{ for every } r \in (0, R_x).
\]
Let $\theta := \varphi^{-1} : [0, T_x) \to [0, R_x)$ be the inverse of $\varphi$.
Since $R_x  > \delta/2$, we have
\begin{equation}\label{T_hat}
    T_x = \lim_{r \nearrow R_x} \varphi(r) \geq \varphi(\delta/2) \geq \frac{\delta}{2 M_x^{m-1}} > 0
     \quad \text{ where } M_x := \sup_{y \in B_\delta(x)} s_f(y) < + \infty.
\end{equation}
Set $\gamma_x(t) := \sigma_x(\theta(t))$ for $t \in [0, T_x)$.
Then \(\gamma_x(0)=x\) and for every $t \in (0, T_x)$,  we have
\[
    \theta'(t)
    = \frac{1}{\varphi'(\theta(t))}
    = s_f^{m-1}(\sigma_x(\theta(t)))
    = s_f^{m-1}(\gamma_x(t)).
\]
Therefore
\[
    \|\dot\gamma_x(t)\|
    \leq \|\dot\sigma_x(\theta(t))\|\, \theta'(t)
    \leq s_f^{m-1}(\gamma_x(t))
    \quad \text{for a.e. } t \in (0, T_x).
\]

It remains to verify inequality~\eqref{descent-local} for $t\in (0,T_x)$. To this end, let $t \in (0, T_x)$ and apply the estimate~\eqref{eq:2.10} for $\sigma_x$ at $\theta(t)$, to obtain
\begin{align*}
    f(x)-f(\gamma_x(t))
    = f(x)-f(\sigma_x(\theta(t)))
    \geq & ~ \int_0^{\theta(t)} s_f(\sigma_x(\rho))\,d\rho - \varepsilon
    \\
    = & \,\int_0^t s_f(\sigma_x(\theta(\tau))) \theta'(\tau) \, d\tau - \varepsilon 
    = \, \int_0^t s_f^m(\gamma_x(\tau)) \, d\tau - \varepsilon.
\end{align*}

\medskip

\textbf{Step 2:} \textit{Global existence.}
We shall prove the existence of a curve $\gamma_x: [0, + \infty) \to X$ with $\gamma_x(0) = x$ satisfying~\eqref{velo_local}--\eqref{descent-local} for all $t\in [0,+\infty)$ (that is, we can take $T_x=+\infty$).

\medskip

Fix a continuous strictly increasing function $\kappa : [0,+\infty]\to[0,\varepsilon]$ such that
\[
        \kappa(0)=0, \quad \kappa(T) < \varepsilon\quad\text{for every }T<+\infty
        \quad \text{ and } \quad \kappa(+\infty) = \varepsilon .
\]
For instance, we can take $\kappa(t) = \varepsilon(1 - e^{-t})$ or $\kappa(t)=\frac{2\varepsilon}{\pi}\arctan t$.
Let $\A_x$ be the set of all pairs $(T, \gamma)$, where \(T\in(0,+\infty]\) and $\gamma : [0, T) \to X$ is absolutely continuous such that $\gamma(0) = x$, 
\begin{equation}\label{velocity-T}   \|\dot\gamma(t)\|\leq s_f(\gamma(t))^{m-1}        \quad\text{for a.e. }t\in(0,T),
\end{equation}
and
\begin{equation}\label{descent-T}
    f(x) - f(\gamma(t)) \geq \int_0^t s_f^m(\gamma(s))\,ds - \kappa(T), \quad \text{ for every } t \in (0, T).
\end{equation}
Due to Step 1, we observe that $\A_x$ is nonempty.
Indeed, let $T_x$ be defined as in Step 1.
Applying the local existence result in Step 1 at $x$ with any error $\eta < \kappa(T_x)$ yields the existence of an admissible pair $(T_x, \gamma) \in \A_x$.

\medskip

For any $(T, \gamma), (T', \gamma') \in \A_x$, we write 
\begin{center}
$(T, \gamma) \preceq (T',  \gamma')$\quad if \,\,$T \leq T'$ \,\,and\,\, $\gamma' \big\vert_{[0, T)} = \gamma$.
\end{center}
Observe that $(\A_x, \preceq)$ is partially ordered. 

\begin{claim}\label{claim.chain-upperbound}
    Every chain in $(\A_x, \preceq)$ admits an upper bound.
\end{claim}

\textit{Proof of Claim~\ref{claim.chain-upperbound}.}
Let $\{ (T_i, \gamma_i) \}_{i \in I}$ be a chain in $\A_x$.
Set $T_\ast = \sup_{i \in I} T_i \in (0, + \infty]$.
By total ordering, the curves are compatible: if \(T_i\le T_j\), then $ \gamma_j=\gamma_i$ on $[0, T_i)$.
Thus we may define a curve \(\gamma_*\) on \([0,T_*)\) by setting
\[
        \gamma_\ast(t):=\gamma_i(t) \quad \text{ whenever } t < T_i.
\]
It follows directly that $\gamma_\ast$ is absolutely continuous, $\gamma_\ast(0) = x$ and 
\[
    \| \dot\gamma_\ast(t) \| \leq s_f^{m - 1}(\gamma_\ast(t)) \quad \text{ for a.e } t \in (0, T_\ast).
\]
The curve $\gamma_\ast$ also satisfies the inequality~\eqref{descent-T} for $t \in (0, T_\ast)$.
Indeed, for any fixed $t < T_\ast$, there exists $i \in I$ such that $t < T_i$.
Therefore, we have
\begin{align*}
    f(x) - f(\gamma_\ast(t)) = f(x) - f(\gamma_i(t)) \geq \int_0^t s_f^m(\gamma_\ast(\tau)) \, d\tau - \kappa(T_i) \geq \int_0^t s_f^m(\gamma_\ast(s)) \, ds - \kappa(T_\ast), 
\end{align*}
where we have used that $\kappa(T_i) \leq \kappa(T_\ast)$.
We have proved that $(T_\ast, \gamma_\ast) \in \A_x$.
Therefore, every chain in $(\A_x, \preceq)$ admits an upper bound in $\A_x$. 
\hfill$\Diamond$

\bigskip

Using Claim~\ref{claim.chain-upperbound}, it follows from Zorn's lemma that $(\A_x, \preceq)$ has a maximal element $(T_{\max}, \gamma)$.
We prove that $T_{\max} = + \infty$.
Arguing by contradiction, assume that $T_{\max} < + \infty$.
Notice first that for every $0 \leq s < t < T_{\max}$ we have:
\[
    \| \gamma(s) - \gamma(t) \| \leq \int_s^t \| \dot{\gamma}(\tau) \| \, d\tau \underbrace{\,\leq\,}_{\eqref{velocity-T}} \int_s^t s_f^{m-1}(\gamma(\tau)) \, d\tau\,.
\]
Applying the H\"older inequality for $p=m$ and $q=\frac{m}{m-1}$
(where $\frac{1}{p}+\frac{1}{q}=1$) we obtain:
\[
\int_s^t s_f^{m-1}(\gamma(\tau)) \, d\tau  
\,\leq \,
\left( \int_s^t 1^m d\tau \right)^{\frac{1}{m}} \, \left( \int_s^t s_f^{m-1}(\gamma(\tau))^{\frac{m}{m-1}}  d\tau  \right)^{\frac{m-1}{m}} 
= 
(t-s)^{\frac{1}{m}} \left( \int_s^t s_f^{m}(\gamma(\tau)) d\tau  \right)^{\frac{m-1}{m}}.
\]
Finally, using an estimate similar to~\eqref{descent-T} we obtain:
\[
\int_s^t s_f^m(\gamma(\tau)) d\tau 
\leq 
f(\gamma(s)) - f(\gamma(t)) + \kappa(T_\ast) 
\leq
f(x) - \inf \, f + \varepsilon\,.
\]
Combining the above, we obtain: 
\[
 \| \gamma(s) - \gamma(t) \| \, \leq \, C\,(t-s)^{\frac{1}{m}}\quad\text{where } \, C:=\left( f(x) - \inf \,f + \varepsilon \right)^{\frac{m-1}{m}}\,.
\]
Since $X$ is complete, the limit $\bar x  := \gamma(T_{\max}) = \lim_{t \nearrow T_{\max} } \gamma(t) $ exists.

\medskip

If $s_f(\bar x) = 0$, take any $\alpha > 0$ and define $       \widetilde\gamma:[0,T_{\max}+\alpha)\to X
$
by
\[
\widetilde\gamma(t) :=
        \begin{cases}
            \gamma(t), & 0\le t \leq T_{\max},\\
            \phantom{t}\bar x\,, & T_{\max}\le t < T_{\max}+\alpha .
        \end{cases}
\]
One can directly check that $(T_{\max} + \alpha, \widetilde\gamma) \in \A_x$, which contradicts the maximality of $(T_{\max}, \gamma)$.

\medskip

Consider the case $s_f(\bar x) > 0$.
Let $T_{\bar x} > 0$ be defined as in Step 1 (at $\bar x$).
Fix $0 < \alpha < T_{\bar x}$.
Since $\kappa$ is strictly increasing, we may choose
\[
    0 < \eta < \kappa(T_{\max}  + \alpha) - \kappa(T_{\max}).
\]
Thanks to Step 1, there exists a curve $\gamma_{\bar x} : [0, T_{\bar x}) \to X$ such that $\gamma_{\bar x}(0) = \bar x$ and it satisfies~\eqref{velo_local}--\eqref{descent-local} with the error $\eta$.
Define $\widetilde\gamma:[0,T_{\max} + \alpha)\to X$ by
\[      \widetilde\gamma(t):=
        \begin{dcases}
            \phantom{tri}\gamma(t), & 0\le t\le T_{\max},\\
            \gamma_{\bar x}(t-T_{\max}), & T_{\max}\le t < T_{\max}+\alpha .
        \end{dcases}
\]

We shall prove that $(T_{\max} + \alpha, \widetilde\gamma) \in \A_x$.
Observe first that $\widetilde\gamma$ satisfies the estimate~\eqref{velocity-T} for a.e $t \in (0, T_{\max} + \alpha)$.
It remains to check that it satisfies~\eqref{descent-T} for every $t \in (0, T_{\max} + \alpha)$.
It suffices to consider the case $t = T_{\max} + t^\prime$ for any $t^\prime \in [0, \alpha)$.
Using the monotone convergence theorem and the continuity of $f$, we obtain
\begin{align*}
    f(x) - f(\widetilde\gamma(T_{\max})) = & \, f(x) - \lim_{t \nearrow T_{\max}} f(\gamma(t)) \\
    \geq & \, \lim_{t \nearrow  T_{\max}}  \int_0^{t} s_f^m(\widetilde\gamma(\tau)) \, d\tau - \kappa(T_{\max}) \geq \int_0^{T_{\max}} s_f^m(\widetilde\gamma(\tau)) \, d\tau - \kappa(T_{\max} + \alpha).
\end{align*}
Then, we have, for any $t = T_{\max} + t^\prime \in (T_{\max}, T_{\max} + \alpha)$
\begin{align*}
    f(x) - f(\widetilde\gamma(t)) = &\, f(x) - f(\gamma_{\bar x}(t^\prime)) \\
    = & \, f(x) - f(\gamma(T_{\max})) + f(\bar x) - f(\gamma_{\bar x}(t^\prime)) \\
    \geq & \, \int_0^{T_{\max}} s_f^{m}(\gamma(\tau)) \, d\tau - \kappa(T_{\max}) +\int_0^{t^\prime} s_f^{m}(\gamma_{\bar x}(\tau)) \, d\tau - \eta \\
    = & \, \int_0^t s_f^{m}(\widetilde \gamma(\tau)) \, d\tau - (\eta + \kappa(T_{\max}))
    \\
    \geq & \, \int_0^t s_f^{m}(\widetilde \gamma(\tau)) \, d\tau - \kappa(T_{\max} + \alpha), 
\end{align*}
where the last inequality follows from the choice of $\eta$.
Thus, we have $(T_{\max} + \alpha, \widetilde\gamma) \in \A_x$, again contradicting the maximality of $(T_{\max}, \gamma)$.

\medskip

In conclusion, we have proved that the maximal element $(T_{\max}, \gamma)$ of $(\A_x, \preceq)$ satisfies $T_{\max} = + \infty$. 
Using the fact that $\kappa(+ \infty) = \varepsilon$, we conclude that the curve $\gamma$ satisfies the desired estimates.
This proves Lemma~\ref{lem.m-curve}.
\end{proof}

Before proceeding to our main result, we shall also need the following proposition.

\begin{proposition} \label{prop-god}
Let $X$ be a Banach space. Assume $f \in \mathcal{C}(X)$ and
$s_f: X \to \R$ is convex. Then, $s_f$ is continuous (and consequently, locally Lipschitz). 
\end{proposition}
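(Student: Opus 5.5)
We need to show that a convex function $s_f: X\to\R$ (finite-valued, nonnegative) is continuous. The standard fact is that a convex function on a Banach space is continuous as soon as it is lower semicontinuous (Baire-category argument), or as soon as it is bounded above on some open set. So the plan is to prove that $s_f$ is lower semicontinuous, or at least bounded above on some ball, using only the continuity of $f$.

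\emph{Step 1 (Baire argument).} For $n\in\N$ set $F_n:=\{x\in X:\, s_f(x)\le n\}$. These sets are convex because $s_f$ is convex, and they cover $X$ because $s_f$ is finite-valued. If we knew the $F_n$ were closed, Baire's theorem would give a ball $B_r(x_0)\subset F_n$ for some $n$. Then $s_f$ would be bounded above on an open set, and the classical result for convex functions would yield continuity and local Lipschitz continuity everywhere. Closedness of $F_n$ is exactly lower semicontinuity of $s_f$, which is not automatic for slopes. So I would instead run Baire on the closures $\overline{F_n}$. Some $\overline{F_n}$ then contains a ball $B_{2r}(x_0)$, which means $F_n\cap B_{2r}(x_0)$ is dense in $B_{2r}(x_0)$ and is convex.

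\emph{Step 2 (from a dense convex set to a genuine bound).} Set $K:=F_n\cap B_{2r}(x_0)$. This set is convex, dense in the ball, and $s_f\le n$ on it. Corollary~\ref{cor-ola} requires $s_f\in\mathcal{C}(X)$, so I cannot invoke it directly. Instead I would redo its proof directly on segments in $K$: for $x,y\in K$ the whole segment lies in $K$, and $f$ restricted to the segment is continuous with lower Dini derivative bounded below by $-n$ (from the definition of the slope). The monotonicity argument of Lemma~\ref{lem.slope_upperbound} with $s_f$ replaced by the constant $n$ then needs no continuity of $s_f$, and gives $|f(x)-f(y)|\le n\|x-y\|$. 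By density and continuity of $f$, $f$ is $n$-Lipschitz on $B_{2r}(x_0)$. It follows that $s_f\le n$ on the open ball $B_{2r}(x_0)$, since the slope at an interior point is bounded by the local Lipschitz constant.

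\emph{Step 3 (conclusion).} The convex function $s_f$ is now bounded above on an open ball. The standard theorem then gives that it is continuous, and in fact locally Lipschitz, on all of $X$. The argument is as follows: for any point $z$, a suitable homothety centred at $z$ maps a neighbourhood of $z$ into a convex combination of $z$ and the ball, so $s_f$ is bounded above near $z$; a convex function that is locally bounded above is locally Lipschitz.

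The main obstacle is Step 2. The delicate point is that the Dini/monotonicity argument needs a lower bound on the derivative at every point of the segment, and convexity of $K$ guarantees precisely that. I will also check that the monotonicity lemma only requires continuity of $F(t)=f(\xi(t))+nt$ together with $D_+F\ge0$ everywhere, which indeed holds.
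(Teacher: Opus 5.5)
Your argument is correct, and it takes a genuinely different route from the paper.

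\textbf{The paper's route.} The proof there is purely descriptive. It writes $s_f=\lim_n\phi_n$, where $\phi_n(x)=\sup_{0<\|h\|<1/n}\max\{f(x)-f(x+h),0\}/\|h\|$ is lower semicontinuous. Hence $s_f$ is Baire-$2$, and therefore Baire-measurable. It then invokes Mehdi's theorem \cite{M_1964}: a Baire-measurable real-valued convex function on a Banach space is continuous. This rests on Godefroy's argument that some sublevel set is convex, absorbing, of second category and has the Baire property, hence is a neighbourhood of $0$.

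\textbf{Your route.} You apply the Baire category theorem only to the closed sets $\overline{\{s_f\le n\}}$. You then use the specific structure of the slope to pass from ``$s_f\le n$ on a dense convex subset of a ball'' to ``$s_f\le n$ on the ball''. The Dini-derivative argument of Lemma~\ref{lem.slope_upperbound}, with the constant $n$ in place of $s_f$, needs no continuity of $s_f$. You are right to avoid Corollary~\ref{cor-ola}, whose use here would be circular. That argument makes $f$ $n$-Lipschitz on the dense convex set, continuity of $f$ extends this to the ball, and the Lipschitz bound then bounds the slope.

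This step is exactly where being a slope matters. For an arbitrary convex function it fails: a discontinuous linear functional has all its sublevel sets dense, yet is unbounded above on every ball. That is why the paper needs the measurability input instead.

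\textbf{What each buys.} The paper's route is shorter given an external theorem, and it expresses a general principle (Baire-measurable convex functions are continuous). Yours is elementary and self-contained within the paper, and it even yields an explicit Lipschitz constant for $f$ near $x_0$.

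\textbf{A small correction in Step~3.} The convex combination should use a point beyond $z$, e.g. $w=z+t(z-x_0)$ with $t>0$. Then $\frac{1}{1+t}w+\frac{t}{1+t}B_{2r}(x_0)=B_{2rt/(1+t)}(z)$, and on this ball $s_f\le \frac{1}{1+t}s_f(w)+\frac{t}{1+t}n$. Combinations of $z$ itself with the ball only cover a cone with apex $z$, which is not a neighbourhood of $z$ when $z\notin \overline{B_{2r}(x_0)}$. Since this is the classical fact you are citing, it does not affect correctness.
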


\begin{proof} Since $f$ is continuous, writing $s_f(x)$ as 
\[
s_f(x) := \lim_{n\to \infty} \,\underbrace{\sup_{0 < \|h\|<\frac{1}{n}}\,\left(\dfrac{\max \{ f(x) - f(x+h), 0 \}}{\| h \|}\right)}_{:=\phi_n(x) \,\,\text{ (lsc function of $x$)}}\,, \quad\text{for every } x\in X,
\]
we deduce that $s_f$ is a Baire-$2$ function and consequently Baire-measurable
\footnote{Here, a Baire-$1$ function is a pointwise limit of continuous functions and a Baire-$2$ function is a pointwise limit of Baire-$1$ functions. In metric spaces, lower semicontinuous functions are Baire-$1$. A real-valued function $g:X\to\mathbb R$ is Baire-measurable if, for every nontrivial open interval $I\subset\mathbb R$, there is an open set $O\subset X$ such that the symmetric difference $g^{-1}(I)\,\triangle\, O$ is a countable union of nowhere dense subsets of $X$.}.
The result follows directly from \cite[Theorem~6]{M_1964}. \end{proof}

\begin{remark}[continuity of convex functions]\label{rem-god} It is well-known that in a finite dimensional space, every convex function $f$ with values in $\R$ is continuous. In every infinite dimensional Banach space, there exist convex (even linear) real-valued functions, which are discontinuous. In all these examples, the functions fail to be Baire-measurable.  \smallskip\newline
One can provide a direct proof (communicated to us by G. Godefroy) of the fact that every Baire-measurable convex function $f$ from a Banach space $X$ to $\R$ is continuous. Indeed, let $x\in X$ be an arbitrary point. By a standard argument (change of coordinates), we can assume that $x=0$ and $f(0)=0$. Setting ${C_n:=\{f\leq n\}}$, for $n\ge 1$, we have that $C_n$ is convex, absorbing and Baire (by assumption). Since $X=\bigcup_{n\geq 1}\,C_n$, there exists some $n_0\in\N$ such that $C_{n_0}$ is of second category. By \cite[Lemme~VI.4.2]{God} $C_{n_0}$ is a neighborhood of $0$ and continuity of $f$ follows. \smallskip\newline
Let us finally recall that completeness of the space $X$ is essential: indeed, taking $X=(c_{00}(\N),\|\cdot\|_2)$ (the space of eventually null sequences equipped with the $2$-norm) we see that the lower semicontinuous convex function $f(x)=\|x\|_1:=\sum_{n\in\N}|x_n|$,
for $x=(x_n)_n \in c_{00}(\N)$, is nowhere continuous.
\end{remark}
\medskip

We are now ready to establish the main result of this work.

\begin{theorem}\label{thm.conti-case} {\normalfont (convexity criterion via slope)}
    Let $X$ be a Banach space and let $f \in \mathcal{C}(X)$ be bounded from below.
    Assume that for some~${m \geq 1}$, the function $V_m := s_f^m: X \to \R$ is convex. Then, $f$ is convex.
\end{theorem}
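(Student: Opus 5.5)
The plan is to replace the gradient flows in the proof of Proposition~\ref{thm.C1-coercive} by the almost-maximal descent curves of Lemma~\ref{lem.m-curve}. The lack of compactness will be handled by a decay estimate along those curves.

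\textbf{Preliminary reductions.} If $s_f$ is convex (the case $m=1$), then $s_f^2$ is also convex, being the square of a nonnegative convex function. So I may assume $m>1$. Next, $V_m=s_f^m$ is Baire-$2$, by the same representation as in Proposition~\ref{prop-god}. Being convex, it is then continuous by the argument of Proposition~\ref{prop-god} and Remark~\ref{rem-god}, and hence locally Lipschitz. Therefore $s_f=V_m^{1/m}$ is continuous. Moreover, $s_f$ is locally uniformly continuous: on a neighborhood where $V_m$ is Lipschitz, $V_m$ is uniformly continuous and bounded, and $t\mapsto t^{1/m}$ is uniformly continuous on bounded subsets of $[0,\infty)$. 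Consequently Lemma~\ref{lem.slope_upperbound}, Corollary~\ref{cor-ola} and Lemma~\ref{lem.m-curve} all apply.

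\textbf{Quasi-monotonicity of the midpoint defect.} Fix $x,y\in X$ and $\varepsilon>0$. Let $\gamma_x,\gamma_y$ be the curves given by Lemma~\ref{lem.m-curve}, and set $\mu=(\gamma_x+\gamma_y)/2$ and $D(t)=\tfrac12(f(\gamma_x(t))+f(\gamma_y(t)))-f(\mu(t))$. Write $a=s_f(\gamma_x(\tau))$ and $b=s_f(\gamma_y(\tau))$. By \eqref{velo_control}, $\|\dot\mu\|\le \tfrac12(a^{m-1}+b^{m-1})$. Lemma~\ref{lem.slope_upperbound} applied to $\mu$ gives $f(\mu(0))-f(\mu(t))\le\int_0^t s_f(\mu)\|\dot\mu\|$. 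Combining this with \eqref{descent} for $\gamma_x$ and $\gamma_y$ yields
\[
D(0)-D(t)\ \ge\ \int_0^t\Big[\tfrac{a^m+b^m}{2}-s_f(\mu)\,\tfrac{a^{m-1}+b^{m-1}}{2}\Big]d\tau-\varepsilon .
\]
The integrand is nonnegative for two reasons. Convexity of $V_m$ gives $s_f(\mu)\le\big(\tfrac{a^m+b^m}{2}\big)^{1/m}$. The power-mean inequality gives $\tfrac{a^{m-1}+b^{m-1}}{2}\le\big(\tfrac{a^m+b^m}{2}\big)^{(m-1)/m}$. Hence $D(0)\ge D(t)-\varepsilon$ for all $t\ge0$.

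\textbf{Asymptotics (main obstacle).} Without compactness I cannot extract critical limits. Instead I will show $\liminf_n D(t_n)\ge0$ along a suitable sequence $t_n\to\infty$.

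Set $g(\tau)=a^m+b^m$. By \eqref{descent} and boundedness from below, $\int_0^\infty g\le f(x)+f(y)-2\inf f+2\varepsilon<\infty$. Therefore $\liminf_{t\to\infty} t\,g(t)=0$, since otherwise $g\ge c/t$ eventually, which is not integrable. Pick $t_n\to\infty$ with $t_n g(t_n)\to0$.

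The Hölder estimate from the proof of Lemma~\ref{lem.m-curve} gives $\|\gamma_z(t)-z\|\le C t^{1/m}$ for $z\in\{x,y\}$. Hence $\|\gamma_x(t_n)-\gamma_y(t_n)\|\le \|x-y\|+2Ct_n^{1/m}$.

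On the segment $[\gamma_x(t_n),\gamma_y(t_n)]$, convexity of $V_m$ gives $s_f\le \max(a,b)\le g(t_n)^{1/m}$. Corollary~\ref{cor-ola} then yields
\[
|D(t_n)|\ \le\ g(t_n)^{1/m}\big(\|x-y\|+2Ct_n^{1/m}\big)\ \longrightarrow\ 0 .
\]
(If $g(t_n)=0$, then $f$ is constant on the segment and $D(t_n)=0$.) Consequently $D(0)\ge-\varepsilon$. Letting $\varepsilon\to0$ gives midpoint convexity of $f$, and continuity of $f$ then yields convexity.

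The delicate point is this last step: choosing $t_n$ so that the decay of the slopes beats the at most $t^{1/m}$ growth of the distance between the curves. I would double-check in particular that the constant $C$ stays uniform in $\varepsilon\le1$.
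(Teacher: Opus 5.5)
Your proposal is correct and follows essentially the same route as the paper. The shared ingredients are the curves of Lemma~\ref{lem.m-curve}; quasi-monotonicity of the midpoint defect $D$ via Lemma~\ref{lem.slope_upperbound} applied to $\mu$ (your pointwise power-mean bound replaces the paper's H\"older-in-time step and is equivalent); and the choice of $t_n$ with $t_n g(t_n)\to 0$, which beats the $t^{1/m}$ growth of $\|\gamma_x(t_n)-\gamma_y(t_n)\|$. The uniformity of $C$ in $\varepsilon$ that you flag is not needed, since for each fixed $\varepsilon$ you let $n\to\infty$ first and $D(0)$ does not depend on $\varepsilon$.
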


\begin{remark}
The assumptions in Theorem~\ref{thm.conti-case} are essential and the
criterion is not reversible.\smallskip\newline
\phantom{t}\textbf{(i).} The continuity assumption on $f$ cannot, in general, be
    weakened to lower semicontinuity. Indeed, let $f:\mathbb R\to\mathbb R$
    be given by
    \[
        f(x)=
        \begin{dcases}
            \phantom{-}x^2, & x\neq 0,\\
            -1\,, & x=0.
        \end{dcases}
    \]
    Then $f$ is lower semicontinuous, bounded from below and nonconvex.
    However $s_f^2(x)=4x^2$ is smooth and convex.
\medskip\newline
\phantom{t}\textbf{(ii).} The assumption that $f$ is bounded from below is essential and cannot be omitted. For instance, the function ${f(x,y)=x^2-y^2}$ is continuous and nonconvex but is not bounded from below. Moreover, ${\| \nabla f(x, y)\|^2 =4(x^2+y^2)}$  is convex. 
    See also~\cite[Remark~3.18]{BCD_2018} for other examples.
\medskip\newline
\phantom{t}\textbf{(iii).} The converse implication in Theorem~\ref{thm.conti-case} is false.
    Namely, convexity of $f$ does not imply convexity of $s_f^m$. 
    Consider
    \[
        f(x)=\sqrt{1 + x^2}, \quad  x\in\mathbb R.
    \]
    Then $f$ is convex, $C^1$, and bounded from below. However,
    \[
        s_f^m(x)=|f'(x)|^m
        =
        \frac{|x|^m}{(1 + x^2)^{m/2}},
        \quad x\in\mathbb R,
    \]
    is not convex for any $m\geq 1$.
\end{remark}

\begin{proof}[Proof of Theorem~\ref{thm.conti-case}]
Notice that if the function $s_f$ is convex, then so is the function $s_f^m$, for every $m\geq 1$. Therefore, it suffices to establish the result for $m > 1$. \smallskip\newline
Notice that according to Proposition~\ref{prop-god}, the convex function $s_f^m$ is continuous and consequently, locally Lipschitz. 
Fix $x, y \in X$ and $\varepsilon > 0$.
Thanks to Lemma~\ref{lem.m-curve}, for each $z \in \{x, y\}$, there exists a curve $\gamma_z: [0, + \infty) \to X$ such that $\gamma_z(0) = z$, 
\begin{equation}\label{velo_control}
    \| \dot{\gamma_z}(t) \| \leq s_f^{m - 1}(\gamma_z(t)) \quad \text{ for  a.e } t > 0,
\end{equation}
and
\begin{equation}\label{eq.gamma_epsi}
    f(z) - f(\gamma_z(t)) \geq \int_0^t V_m(\gamma_z(\tau)) \,d\tau - \varepsilon \quad \text{ for every } t > 0.
\end{equation}
Set 
\begin{align*} 
    \mu(t) := & \,\dfrac{\gamma_x(t) + \gamma_y(t)}{2} \qquad \text{and} \qquad D(t) := \, 
    \dfrac{1}{2} \Big( f(\gamma_x(t)) + f(\gamma_y(t)) \Big) - f(\mu(t)).
\end{align*}

We first prove that for every $t > 0$, one has $D(t) - 2 \varepsilon \leq D(0)$.
Indeed, it follows from \eqref{eq.gamma_epsi} that
\[
    D(t) - D(0) \leq - \dfrac{1}{2}\int_0^t \Big( V_m(\gamma_x(\tau)) + V_m(\gamma_y(\tau)) \Big) \, d\tau + 2 \varepsilon + f(\mu(0)) - f(\mu(t)).
\]
Applying Lemma~\ref{lem.slope_upperbound} to the curve $\mu: [0, t] \to X$, we obtain 
\[
    f(\mu(0)) - f(\mu(t)) \leq \int_0^t s_f(\mu(\tau)) \| \dot{\mu}(\tau) \| \, d\tau.
\]
H\"older's inequality then yields 
\[
\int_0^t s_f(\mu(\tau)) \| \dot{\mu}(\tau) \| \,d\tau  \leq \left( \int_0^t V_m(\mu(\tau)) \,d\tau \right)^\frac{1}{m} \left( \int_0^t \| \dot\mu(\tau) \|^\frac{m}{m - 1} \,d\tau \right)^{\frac{m - 1}{m}}.
\]
On the one hand, it follows from the convexity of $V_m$ that
\[
    \int_0^t V_m(\mu(\tau)) \,d\tau \, \leq \, \dfrac{1}{2} \int_0^t V_m(\gamma_x(\tau)) + V_m(\gamma_y(\tau)) \, d\tau.
\]
On the other hand, recall that for each $z \in \{ x, y \}$, one has $\| \dot\gamma_z \| \leq s_f^{m - 1}(\gamma_z(t))$ for a.e $t > 0$.
Hence the convexity of the map $w \mapsto \| w \|^\frac{m}{m - 1}$ yields
\[
    \int_0^t \| \dot\mu (\tau) \|^\frac{m}{m - 1} \,d\tau \leq \dfrac{1}{2} \int_0^t \Big( \| \dot\gamma_x(\tau) \|^\frac{m}{m - 1} + \| \dot\gamma_x(\tau) \|^\frac{m}{m - 1} \Big) \, d\tau \leq \dfrac{1}{2} \int_0^t V_m(\gamma_x(\tau)) + V_m(\gamma_y(\tau)) \, d\tau.
\]
Therefore,
\[
    f(\mu(0)) - f(\mu(t)) \leq \dfrac{1}{2} \int_0^t V_m(\gamma_x(\tau)) + V_m(\gamma_y(\tau)) \, d\tau
\]
which yields
\begin{equation}\label{D_differ_epsi}
    D(t) - D(0) \leq 2\varepsilon.
\end{equation}

\medskip

We now quantify the difference between $f(\mu(t))$ and $f(\gamma_z(t))$ for each $z \in \{x, y\}$.
Fix $z \in \{ x, y \}$ and $t > 0$.
Applying Lemma~\ref{lem.slope_upperbound} to the straight-line segment connecting $\gamma_z(t)$ and $\mu(t)$, we get
\begin{equation}\label{f-differ}
\begin{split}
    f(\gamma_z(t)) - f(\mu(t)) 
    \leq & \,
    \int_0^1 s_f(\underbrace{\mu(t) + \tau(\gamma_z(t) - \mu(t))}_{\xi(\tau)}) \, \| \underbrace{\gamma_z(t) - \mu(t)}_{\dot{\xi}(\tau)} \| \, d\tau 
    \\
    = & \, \int_0^1V_m(\mu(t) + \tau(\gamma_z(t) - \mu(t))^{1/m }  \| \gamma_z(t) - \mu(t) \| \, d\tau
    \\
    \leq & \, \left(  V_m(\gamma_x(t)) + V_m(\gamma_y(t)) \right)^{1/m}\| \gamma_z(t) - \mu(t) \|,
\end{split}
\end{equation}
where convexity of $V_m$ is used to obtain the last inequality. 
Furthermore, it follows from~\eqref{velo_control} and H\"older's inequality that
\[
    \| \gamma_z(t) - z \| \leq \int_0^t \| \dot{\gamma_z}(\tau) \| \, d\tau \leq t^{1/m} \left( \int_0^t s_f^m(\gamma_z(\tau)) \, d\tau \right)^{\frac{m - 1}{m}} \leq C_z t^{1/m},
\]
where 
$C_z := \big( f(z) - \inf \, f + \varepsilon \big)^{(m - 1)/m}$.
Therefore, there exists a constant $C > 0$, depending on $f, x, y$ and $\varepsilon > 0$, such that
\begin{equation}\label{gama_mu}
    \| \gamma_z(t) - \mu(t) \| \leq C t^{1/m} \quad \text{ for every } t > 0.
\end{equation}
Combining~\eqref{f-differ} and \eqref{gama_mu}, we arrive at
\[
    f(\gamma_z(t)) - f(\mu(t)) \leq C t^{1/m}\left(  V_m(\gamma_x(t)) + V_m(\gamma_y(t)) \right)^{1/m}.
\]
By a similar argument, we obtain
\begin{equation}\label{f_differ_abs}
    \big|  f(\gamma_z(t)) - f(\mu(t)) \big| \leq C t^{1/m}\left(  V_m(\gamma_x(t)) + V_m(\gamma_y(t)) \right)^{1/m} \quad \text{ for every } t > 0 \text{ and } z \in \{x, y\}.
\end{equation}

To continue, we need the following fundamental fact.

\begin{claim}\label{claim.tn}
    Let $h\in L^1(\R_+,\R_+)$. Then there exists a sequence 
    $\{ t_n \}_n$ such that
    \[
        t_n \underset{n\to\infty}{\longrightarrow} \infty \quad \text{  and } \quad t_n\,h(t_n) \underset{n\to\infty}{\longrightarrow}  0.
    \]
\end{claim}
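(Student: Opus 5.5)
We argue by contradiction. Suppose no such sequence exists. Then $\liminf_{t\to\infty} t\,h(t)>0$, so there exist $c>0$ and $T>0$ such that $t\,h(t)\geq c$ for every $t\geq T$. Note that $h$ is only defined almost everywhere. If necessary, we fix a representative and read the negation as follows: for some $c>0$ and $T>0$, the set of $t\geq T$ with $t\,h(t)<c$ is empty. Working with a fixed representative, the negation of the claim is exactly this pointwise statement, which is what we use.

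From $h(t)\geq c/t$ for all $t\geq T$ we get
\[
\int_T^{\infty} h(t)\,dt \;\geq\; c\int_T^{\infty}\frac{dt}{t} \;=\; +\infty,
\]
which contradicts $h\in L^1(\R_+,\R_+)$.

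The only subtlety is the step ``no such sequence $\Rightarrow$ $t\,h(t)\geq c$ eventually''. We make it explicit. If for every $n\in\N$ there were some $t_n\geq n$ with $t_n h(t_n)<1/n$, then $\{t_n\}_n$ would be the desired sequence. Hence there exists $n_0$ such that $t\,h(t)\geq 1/n_0$ for all $t\geq n_0$, and we take $c=1/n_0$ and $T=n_0$.

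In the intended application, $h(t)=\tfrac12\big(V_m(\gamma_x(t))+V_m(\gamma_y(t))\big)$, which is continuous in $t$ and integrable by \eqref{eq.gamma_epsi} together with the boundedness of $f$ from below. So the representative issue does not even arise there, and the argument is elementary with no real obstacle.
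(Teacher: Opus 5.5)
Your proof is correct and follows the same route as the paper. Both argue by contradiction to get $t\,h(t)\geq c$ for all $t\geq T$, and then compare with the divergent integral $c\int_T^\infty dt/t$. Your construction of $t_n\geq n$ with $t_n h(t_n)<1/n$ spells out the negation step that the paper leaves implicit.
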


\textit{Proof of Claim~\ref{claim.tn}.}
Arguing by contradiction, assume that there exists $\varepsilon > 0$ and $T > 0$ such that
\begin{align*}
    t h(t) > \varepsilon \,\, \text{ for every } t > T.
\end{align*}
It follows that 
\[
    \int_T^\infty h(t) \, dt \geq \varepsilon \int_T^\infty \dfrac{dt}{t} = + \infty,
\]
which contradicts the fact that $h$ is integrable. 
\hfill $\Diamond$

\medskip

Thanks to~\eqref{eq.gamma_epsi}, we observe that
\[
    \int_0^\infty V_m(\gamma_x(t)) + V_m(\gamma_y(t)) \, dt< + \infty.
\]
Therefore, applying Claim~\ref{claim.tn} to the case $h(t) = V_m(\gamma_x(t)) + V_m(\gamma_y(t))$, there exists $\{ t_n \}$ such that $t_n \underset{n \to \infty}{\longrightarrow} \infty$ and 
\[
    t_n \Big( V_m(\gamma_x(t_n)) + V_m(\gamma_y(t_n)) \Big) \to 0 \text{ as } n \to \infty.
\]
Substituting $t = t_n$ into \eqref{f_differ_abs}, we obtain
\[
    |D(t_n)| \leq \dfrac{1}{2} \Big( |f( \gamma_x(t_n) ) - f( \mu(t_n) ) | + |f( \gamma_y(t_n) ) - f( \mu(t_n) ) |  \Big) \to 0 \quad \text{ as } n  \to \infty.
\]
We have proved that there exists a sequence $\{t_n\}$ such that
\begin{equation}\label{limD_zero}
    \lim_{n \to \infty} D(t_n) = 0.
\end{equation}

\medskip

To conclude, combining~\eqref{D_differ_epsi} and \eqref{limD_zero}, we get $  D(0) \geq  - 2\varepsilon.$
Since $\varepsilon > 0$ is chosen arbitrarily, we deduce $D(0) \geq 0$, equivalently
\[
    f \left( \dfrac{x + y}{2} \right) \leq \dfrac{1}{2} (f(x) + f(y)).
\]
Therefore, $f$ is convex, which completes the proof.
\end{proof}

\subsection{Inducing regularity from the slope}
In this last subsection, we complete the previous result by observing that in the Hilbert case, continuity of the slope mapping $s_f$ induces some extra regularity on $f$. This is the aim of the following proposition.

\begin{proposition}\label{prop.conti_to_C1}
    Let $\mathcal{H}$ be a Hilbert space and $f : \mathcal{H} \to \R$ be a lower semicontinuous convex function with continuous slope $s_f$.
    Then, $f \in \mathcal{C}^1(\mathcal{H})$.
\end{proposition}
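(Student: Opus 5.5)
The strategy is to identify $s_f$ with the norm of the minimal-norm subgradient and then use continuity of $s_f$ to force the subdifferential to be single-valued and norm-to-norm continuous.

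\textbf{Step 1: the slope formula.} For a proper lsc convex $f$ on a Hilbert space one has, at every $x\in\mathrm{dom}\,\partial f$,
\[
s_f(x)=\|\partial^\circ f(x)\|=\min\{\|v\|:\,v\in\partial f(x)\},
\]
and $s_f(x)=+\infty$ when $\partial f(x)=\emptyset$. This is \cite[Proposition~1.4.4]{AGS_2008}; I would either cite it or reprove it. The inequality $\le$ comes from the subgradient inequality. The inequality $\ge$ uses the separation and minimax argument, or the Moreau--Yosida resolvent.

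Since $s_f$ is continuous, hence finite everywhere, $\partial f(x)\neq\emptyset$ for every $x$. In particular $f$ is finite everywhere. A finite lsc convex function on a Banach space is continuous, by the Baire argument as in Remark~\ref{rem-god}, and therefore locally Lipschitz.

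\textbf{Step 2: single-valuedness of $\partial f$.} This is the main obstacle. Fix $x$ and $v\in\partial f(x)$, and take the direction $d=-v/\|v\|$ (assume $v\ne0$; otherwise argue with any direction). Consider points $x_t=x+t\,w$ for a direction $w$ to be chosen. Monotonicity gives, for $v_t\in\partial f(x_t)$ of minimal norm,
\[
\langle v_t-v,\,w\rangle\ge0,\qquad \|v_t\|=s_f(x_t)\to s_f(x)=\|\partial^\circ f(x)\|.
\]
I would argue via norm-to-weak upper semicontinuity of $\partial f$ (the graph is closed in norm$\times$weak, and it is locally bounded since $f$ is locally Lipschitz). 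Any weak cluster point $u$ of $v_t$ as $t\downarrow0$ lies in $\partial f(x)$. Moreover $\|u\|\le\liminf\|v_t\|=\|\partial^\circ f(x)\|$, so $u=\partial^\circ f(x)$, and then $v_t\rightharpoonup u$ with $\|v_t\|\to\|u\|$. In a Hilbert space this gives $v_t\to u$ in norm.

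Hence $\partial^\circ f(x_t)\to\partial^\circ f(x)$ strongly along every approaching sequence. In other words, the minimal section $x\mapsto\partial^\circ f(x)$ is norm-to-norm continuous.

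\textbf{Step 3: Gâteaux and then Fréchet differentiability.} Now compute one-sided directional derivatives. By convexity and the mean value inequality, for $t>0$
\[
\langle \partial^\circ f(x+tw)\cdot\text{(at an intermediate point)},w\rangle \;\ge\; \frac{f(x+tw)-f(x)}{t}\;\ge\; \langle v,w\rangle \quad\text{for all } v\in\partial f(x).
\]
More precisely, the upper estimate is $f(x)-f(x+tw)\ge\langle \partial^\circ f(x+tw),-tw\rangle$. Letting $t\downarrow0$ and using Step 2, we get $f'(x;w)\le\langle\partial^\circ f(x),w\rangle$. On the other hand $f'(x;w)=\max_{v\in\partial f(x)}\langle v,w\rangle$. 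So the support function of $\partial f(x)$ is dominated by the linear functional $\langle\partial^\circ f(x),\cdot\rangle$, which forces $\partial f(x)=\{\partial^\circ f(x)\}$.

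Thus $f$ is Gâteaux differentiable with gradient $\nabla f=\partial^\circ f$, which is norm-continuous by Step 2. A continuous Gâteaux derivative implies Fréchet differentiability by the mean value theorem, so $f\in\mathcal C^1(\mathcal H)$.

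The delicate point is the strong convergence in Step 2. It relies on the Hilbert (or uniformly convex, Kadec--Klee) structure together with the slope formula of Step 1; this is exactly where the Hilbert hypothesis enters.
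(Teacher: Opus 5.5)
Your argument is correct, but it runs in the opposite order from the paper's.

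\textbf{The paper's route.} It first proves that $\partial f$ is single-valued by a direct monotonicity argument. Suppose $\xi_0\in\partial f(\bar x)$ is not the minimal-norm subgradient, and set $e=\xi_0/\|\xi_0\|$. Monotonicity gives $\|\eta\|\ge\langle\eta,e\rangle\ge\|\xi_0\|$ for all $\eta\in\partial f(\bar x+te)$. Hence $s_f(\bar x+te)\ge\|\xi_0\|>s_f(\bar x)$ for every $t>0$, which contradicts continuity of $s_f$. Only then does the paper use boundedness, weak cluster points lying in $\partial f(x)$, and convergence of norms to get continuity of $\nabla f$.

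\textbf{Your route.} You run that weak-compactness/Radon--Riesz argument first, for the minimal-norm selection $\partial^\circ f$. You then obtain single-valuedness and G\^ateaux differentiability by letting $t\downarrow 0$ in $f(x+tw)-f(x)\le t\langle\partial^\circ f(x+tw),w\rangle$.

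\textbf{What each buys.} Your route uses a single compactness argument that delivers both conclusions. It also makes explicit several points the paper leaves tacit: the identity $s_f=\|\partial^\circ f\|$, nonemptiness of $\partial f$, why weak limits stay in $\partial f(x)$, and the passage from a continuous G\^ateaux gradient to $\mathcal{C}^1$. The paper's route is shorter. It also shows that single-valuedness needs no compactness at all, only upper semicontinuity of $s_f$ at $\bar x$ along the rays $t\mapsto\bar x+t\xi_0/\|\xi_0\|$.

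\textbf{Cosmetic points.} The direction $d=-v/\|v\|$ and the monotonicity inequality at the start of your Step 2 are never used. With that sign, monotonicity gives no lower bound on $\|v_t\|$; the paper's trick needs $+v/\|v\|$. So Step 2 actually proves continuity of $\partial^\circ f$, not single-valuedness. Also, the first display of Step 3 should simply be replaced by the inequality you give after ``more precisely''.
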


\begin{proof}
    We first prove that the convex subdifferential $\partial f$ is a singleton at every point in $\mathcal{H}$.
    Indeed, arguing by contradiction, we assume that there exists $\bar x \in \mathcal{H}$ such that $\partial f(\bar x)$ is not a singleton. 
    Then, there exists $\xi_0 \in \partial f(\bar x)$ with $\xi_0 \neq \xi_{\bar x}$, where $\{ \xi_{\bar x} \} = \mathrm{argmin} \big\{ \| \xi \| : \xi \in \partial f(\bar x) \big\}$.
    Consequently,
    \begin{equation}\label{eq.contra}
        \| \xi_0 \| > \|\xi_{\bar x}\| =  \min_{\xi \in \partial f(\bar x)} \| \xi \| = s_f(\bar x).
    \end{equation}
    Denote $e := \xi_0 / \| \xi_0 \|$.
    For any fixed $t > 0$, it follows from the monotonicity of $\partial f$ that
    \[
        \langle \eta - \xi_0, te \rangle \geq 0 \,\, \text{ for every } \eta \in \partial f(\bar x + te).
    \]
    Hence, using the Cauchy--Schwarz inequality and the fact that $\| e \| = 1$, we get
    \[
        \| \eta \| \geq \langle \eta, e \rangle \geq \langle \xi_0, e \rangle = \| \xi_0 \| \,\, \text{ for every } \eta \in \partial f(\bar x + te). 
    \]
    Taking the infimum with respect to $\eta$, we obtain
    \[
        s_f(\bar x + te) \geq \| \xi_0 \| \,\, \text{ for every } t > 0, 
    \]
    which, thanks to the continuity of slope, leads to
    \[
        s_f(\bar x) = \lim_{t \to 0^+} s_f(\bar x + te) \geq \| \xi_0 \|,
    \]
    which contradicts \eqref{eq.contra}. 
    Therefore, $\partial f$ is singleton at every point in $\mathcal{H}$. 

    \medskip

    To conclude, we will prove that $\nabla f(\cdot)$ is continuous.
    Let $\{ x_n \} \subset \mathcal{H}$ and $x \in \mathcal{H}$ be such that $x_n \to x$ as $n \to \infty$.
    It follows that $\| \nabla f (x_n) \| \to \| \nabla f (x)\|$ as $n \to \infty$.
    Hence $\{\nabla f (x_n) \}_n$ is bounded and so there exist $\xi \in \mathcal{H}$ such that, up to a subsequence, $\nabla f(x_n) \rightharpoonup  \xi$ as $n \to \infty$.
    One can show that $\xi \in \partial f(x)$ and hence $\xi = \nabla f(x)$.
    Since $\{ \nabla f(x_n) \}$ converges weakly and in norm to $\nabla f(x)$, we infer that it strongly converge to $\nabla f(x)$.
    Proposition~\ref{prop.conti_to_C1} is proven.
\end{proof}

As a direct consequence of Theorem~\ref{thm.conti-case} and Proposition~\ref{prop.conti_to_C1}, we obtain the following result.
\begin{corollary}
    Let $\mathcal{H}$ be a Hilbert space. 
    Let $f \in \mathcal{C}(\mathcal{H})$ be a bounded from below such that $s_f$ is convex.
    Then, $f$ is a $\mathcal{C}^1$ convex function on $\mathcal{H}$.
\end{corollary}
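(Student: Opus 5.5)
The statement to prove is the final Corollary. Its proof combines Theorem~\ref{thm.conti-case} (with $m=1$) and Proposition~\ref{prop.conti_to_C1}.

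\textbf{Step 1: convexity of $f$.} Since $\mathcal{H}$ is a Banach space, $f\in\mathcal{C}(\mathcal{H})$ is bounded from below, and $V_1=s_f$ is convex by assumption, Theorem~\ref{thm.conti-case} applies with $m=1$. Hence $f$ is convex. In particular, $f$ is a lower semicontinuous (indeed continuous) convex function on $\mathcal{H}$.

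\textbf{Step 2: continuity of the slope.} By hypothesis, $s_f$ is a real-valued convex function on $\mathcal{H}$ and $f$ is continuous. Proposition~\ref{prop-god} therefore shows that $s_f$ is continuous, and in fact locally Lipschitz. This is the only point where something must be checked: the Corollary assumes only convexity of $s_f$, while Proposition~\ref{prop.conti_to_C1} needs continuity. The automatic continuity in Proposition~\ref{prop-god} rests on Baire measurability of $s_f$ and the completeness of $\mathcal{H}$, and it closes exactly this gap.

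\textbf{Step 3: conclusion.} From Steps 1 and 2, $f$ is a lower semicontinuous convex function on the Hilbert space $\mathcal{H}$ with continuous slope. Proposition~\ref{prop.conti_to_C1} then gives $f\in\mathcal{C}^1(\mathcal{H})$. Together with Step 1, $f$ is a $\mathcal{C}^1$ convex function.

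As a consistency check, for convex $\mathcal{C}^1$ functions one has $s_f=\|\nabla f\|$, which matches the smooth criterion stated in the introduction.
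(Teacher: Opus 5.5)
Your proof is correct and takes the paper's route. Theorem~\ref{thm.conti-case} with $m=1$ gives convexity of $f$, and Proposition~\ref{prop.conti_to_C1} then gives $f\in\mathcal{C}^1(\mathcal{H})$; your Step~2, which uses Proposition~\ref{prop-god} to obtain continuity of $s_f$ from its convexity, is a good addition, since the paper leaves that step implicit when it calls the corollary a direct consequence.
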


\bigskip

We shall now show that the above result is essentially optimal, in the sense that we cannot obtain better regularity on $f$, even if we assume more regularity on $s_f$.

\begin{proposition}\label{prop.higher-regu}
Let $\beta \in (0, 1]$.
    Then, the following assertions hold true: 
    \begin{itemize}
        \item[$(i)$] If $f \in \mathcal{C}^1(\R)$ and its slope $|f'|$ is locally $\beta$--H\"older, then $f$ is of class $\mathcal{C}^{1, \beta}_\mathrm{loc}(\mathbb R)$.
        \item[$(ii)$] There exists a convex $\mathcal{C}^1$ function $f: \R^2 \to [0, + \infty)$ such that $\| \nabla f \|$ is locally $\beta$--H\"older but the gradient $\nabla f$ fails to be $\beta$--H\"older around the origin. 
    \end{itemize}
\end{proposition}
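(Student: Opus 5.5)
For $(i)$ the plan is a case analysis on the sign of $f'$, using only one-dimensional continuity. Fix a compact interval $J$ and let $L$ be the $\beta$--H\"older constant of $|f'|$ on $J$. Take $x<y$ in $J$. If $f'(x)$ and $f'(y)$ have the same sign (or one of them vanishes), then
\[
|f'(x)-f'(y)| = \big||f'(x)|-|f'(y)|\big| \le L|x-y|^\beta .
\]
If the signs are strictly opposite, the intermediate value theorem applied to the continuous function $f'$ gives $z\in(x,y)$ with $f'(z)=0$. Then
\[
|f'(x)-f'(y)| \le |f'(x)|+|f'(y)| = \big||f'(x)|-|f'(z)|\big|+\big||f'(y)|-|f'(z)|\big| \le L\big(|x-z|^\beta+|y-z|^\beta\big) \le 2^{1-\beta}L|x-y|^\beta ,
\]
where the last step uses concavity of $s\mapsto s^\beta$. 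Hence $f'$ is locally $\beta$--H\"older with constant at most $2L$, and so $f\in\mathcal C^{1,\beta}_{\mathrm{loc}}(\R)$. This part is routine.

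For $(ii)$ the point is that in dimension two the direction of $\nabla f$ can rotate while its norm stays controlled. I would look for a $1$-homogeneous-type example degenerated near $0$, namely a convex $f$ with $\|\nabla f\|$ radial but with direction varying in the angle. The first candidate is $f(x)=\|x\|^{1+\beta}\,g(\theta)$, or more simply a gauge construction $f=\tfrac{1}{1+\beta}\,\rho(x)^{1+\beta}$, where $\rho$ is the gauge of a smooth, strictly convex, non-Euclidean body $K$. Then $\nabla f(x)=\rho(x)^\beta\,\nabla\rho(x)$ and $f$ is convex, $\mathcal C^1$ and nonnegative. However, here $\|\nabla f\|=\rho^\beta\|\nabla\rho\|$ and $\nabla f$ are both $\beta$--H\"older near $0$ (homogeneity of degree $\beta$), so this gives no counterexample. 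Homogeneity of degree $\beta$ is automatically $\beta$--H\"older, so the failure must come from faster angular oscillation. Hence I would use instead a sum of scaled pieces: on dyadic annuli $2^{-k-1}\le\|x\|\le 2^{-k}$ the gradient direction rotates by a fixed angle over a distance $\ll 2^{-k}$, while $\|\nabla f\|$ stays equal to a radial function $\phi(\|x\|)$ with $\phi(r)=r^\beta$.

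The concrete attempt is to prescribe $f$ through its sublevel sets. Choose $f=\psi(\rho(x))$, where $\rho$ is now a quasi-gauge whose level curves are convex curves $C_r$ of "radius" $r$, with normals turning quickly (curvature $\sim r^{-1}\cdot$ large) on short arcs of length $\ell_r=o(r)$. Then pick $\psi$ so that $\|\nabla f\|=\psi'(\rho)\|\nabla\rho\|$ is a prescribed H\"older function of $\|x\|$. The rotation of the unit normal by a fixed angle across a distance $\ell_r$ at gradient size $\sim r^\beta$ gives
\[
|\nabla f(a)-\nabla f(b)| \sim r^\beta \gg \ell_r^\beta ,
\]
so $\nabla f$ fails to be $\beta$--H\"older at $0$.

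The main obstacle is reconciling convexity of $f$ (convex nested sublevel sets together with a suitable growth of $\psi$) with exact control of $\|\nabla f\|$ along curves where the normal turns fast. Fast turning tends to force $\|\nabla\rho\|$ to vary, which would spoil the H\"older continuity of the norm. I would first try to handle this via the convex conjugate: define $f^*$ on the dual plane so that $\partial f^*$ maps circles $\{\|\xi\|=s\}$ onto the rotated, sharply-cornered curves. Specifying $f^*$ in polar coordinates as $f^*(\xi)=h(\|\xi\|,\arg\xi)$ with $\nabla f$ landing on circles $\|\xi\|=\phi(r)$ makes the norm condition automatic. Convexity and the $\mathcal C^1$ property then reduce to strict convexity and smoothness conditions on $h$, which are easier to check.
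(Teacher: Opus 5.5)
Your part $(i)$ is correct and is essentially the paper's argument: a case split on signs, a zero $z$ of $f'$ between $x$ and $y$ from the intermediate value theorem, and concavity of $r\mapsto r^\beta$ giving the constant $2^{1-\beta}L$.

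Part $(ii)$ has a genuine gap. It is an existence claim, and you never produce a function. You discard the gauge candidate yourself, and the dyadic-annuli, quasi-gauge and conjugate constructions are only sketched. The obstacle you name is exactly what would have to be proved: keeping $f$ convex while the normals of its level curves turn by a fixed angle on arcs of length $\ell_r=o(r)$, without $\|\nabla f\|$ oscillating there by an amount of order $r^\beta\gg\ell_r^\beta$. You leave it open. The conjugate reformulation also does not make the norm condition automatic. If $\nabla f^*$ maps the circle $\{\|\xi\|=s\}$ onto a curve $C_s$, all you get for free is $\|\nabla f\|\equiv s$ on $C_s$, so the level sets of $\|\nabla f\|$ are the $C_s$, not circles. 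Local $\beta$--H\"older continuity of $\|\nabla f\|$ then becomes a separation condition, roughly $|s-s'|\le L\,\mathrm{dist}(C_s,C_{s'})^\beta$ for nearby curves, which must be checked against the fast turning. Finiteness and $\mathcal{C}^1$-smoothness of $f$ likewise need growth and strict convexity properties of $f^*$ that you do not verify.

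The missing idea is that a degenerate example, with $\|\nabla f\|\to 0$ at the origin, is the hard way: the angular variation gets multiplied by $r^\beta$, which forces super-fast rotation. Instead, keep one gradient component bounded away from zero. If $a\ge c>0$, then $\sqrt{a^2+b^2}$ depends Lipschitz-continuously on $(a^2,b^2)$, and $b^2$ can be more regular than $b$. The paper takes $f(x,y)=e^x+\frac{1}{1+\beta/2}|y|^{1+\beta/2}$, which is convex, $\mathcal{C}^1$ and nonnegative, with $\nabla f(x,y)=(e^x,\operatorname{sgn}(y)|y|^{\beta/2})$ and $\|\nabla f(x,y)\|=(e^{2x}+|y|^\beta)^{1/2}$. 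On a compact set, $e^{2x}$ is bounded below by a positive constant, so $\|\nabla f\|$ inherits the local $\beta$--H\"older continuity of $e^{2x}+|y|^\beta$. The second component of $\nabla f$, by contrast, is only $\beta/2$--H\"older at $y=0$, since $|y|^{\beta/2}/|y|^\beta\to\infty$. In your terms, the direction of $\nabla f$ does rotate, by an angle of order $|y|^{\beta/2}$, but the nondegenerate component lets this rotation enter the norm only at second order.
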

\begin{proof}
$(i)$
Fix $\bar x \in \mathbb R$ and choose $\delta >0$ sufficiently small.
Set $I := [\bar x-\delta,\bar x+\delta]$.
Since $|f'|$ is locally $\beta$--H\"older, there exists $C_I>0$ such that
\[
    \big||f'|(x)-|f'|(y)\big|
    \leq C_I |x-y|^\beta
    \qquad \text{for every } x,y\in I.
\]
We claim that $f'$ is $\beta$--H\"older on $I$.
Fix $x,y\in I$.
The case $f'(x)\cdot f'(y) = 0$ is immediate.
If $f'(x)$ and $f'(y)$ have the same sign, then
\[
    |f'(x)-f'(y)|
    =
    \big||f'|(x)-|f'|(y)\big|
    \leq C_I |x-y|^\beta .
\]

It remains to consider the case in which $f'(x)$ and $f'(y)$ have opposite signs. 
By the continuity of $f'$, there exists $z$ between $x$ and $y$ such that
$f'(z)=0$. Hence
\[
    |f'(x)|
     \leq C_I |x-z|^\beta \text{ and }
    |f'(y)|
    \leq C_I |y-z|^\beta
\]
The concavity of the map $r \mapsto r^\beta$ implies that
\[
    |x-z|^\beta+|y-z|^\beta
    \leq 2^{1 - \beta} (|x - z| + |y - z|)^\beta = 2^{1-\beta}|x-y|^\beta.
\]
Therefore,
\[
    |f'(x)-f'(y)|
    \leq |f'(x)|+|f'(y)|
    \leq2^{1 -\beta} C_I |x - y|^\beta.
\]
Thus $f'\in \mathcal{C}^{0,\beta}(I)$. Since $\bar x$ was arbitrary, we conclude that
$f\in \mathcal{C}^{1,\beta}_{\mathrm{loc}}(\mathbb R)$.

\medskip
$(ii)$ 
Fix $0<\beta\leq 1$ and define
\[
    f(x,y)=e^x+\frac{1}{1+\beta/2}|y|^{1+\beta/2},
    \qquad \text{ for } (x,y)\in \mathbb R^2 .
\]
Observe first that $f\geq 0$, $f\in \mathcal{C}^1(\mathbb R^2)$ and is convex.
Also a direct computation yields
\[ 
    \nabla f(x,y) = \big(e^x,\operatorname{sgn}(y)|y|^{\beta/2}\big) \quad 
    \text{ and } \quad V(x, y) := \| \nabla f(x, y) \| = \big(e^{2x}+|y|^\beta\big)^{1/2}.
\]
It is straightforward to see that $\nabla f$ is locally $(\beta/2)$--H\"older and fails to be $\beta$--H\"older around the origin.

\medskip

We claim that $V$ is locally $\beta$--H\"older.
Fix $(\bar x,\bar y)\in\mathbb R^2$ and let $K\subset\mathbb R^2$ be a compact neighborhood of $(\bar x,\bar y)$.
For $z=(x,y)$ and $z'=(x',y')\in K$, we have
\[
\begin{aligned}
    |V(z)-V(z')|
    &=
    \Big|
        \big(e^{2x}+|y|^\beta\big)^{1/2}
        -
        \big(e^{2x'}+|y'|^\beta\big)^{1/2}
    \Big|
    = 
    \frac{\big|e^{2x}-e^{2x'}+|y|^\beta-|y'|^\beta\big|}
        {
        \big(e^{2x}+|y|^\beta\big)^{1/2} + \big(e^{2x'}+|y'|^\beta\big)^{1/2}
        }.
\end{aligned}
\]
Since $e^{2x}\geq 1$ for all $x\in\mathbb R$, we get
\[
    |V(z)-V(z')|
    \leq
    \frac12 |e^{2x}-e^{2x'}|
    +
    \frac12 \bigl||y|^\beta-|y'|^\beta\bigr|.
\]
Note that $x \mapsto e^{2x}$ is smooth in $\R$ and for $0<\beta\leq 1$, the map $t\mapsto |t|^\beta$ is globally $\beta$--H\"older on $\R$.
Therefore, there exists $C_K > 0$ such that 
\[
    |V(z)-V(z')|
    \leq
    C_K\big(|x-x'|^\beta+|y-y'|^\beta\big)
    \leq
    C_K |z-z'|^\beta
    \qquad\text{for all } z,z'\in K.
\]
Hence $V=\|\nabla f\|$ is locally $\beta$--H\"older.
This completes the proof.
\end{proof}

\begin{remark} $\, $
\begin{itemize}
\item[(i)]
For $\beta \in (1/2,1]$, the function in Proposition~\ref{prop.higher-regu}--(ii) can
even be chosen to be bounded from below and to have convex squared slope. 
Indeed, fix $p\in[1,2\beta)$ and set
\[
q(y) \,:= \, \mathrm{sgn}(y)|y|^{p/2}\sqrt{1 + |y|^p} \quad \text{ and } \quad f(x, y) \,:=\,  e^x  + \int_0^y q(t) \, dt.
\]
Then $f\geq 0$, $f\in \mathcal{C}^1(\mathbb R^2)$ and $f$ is convex, since $e^x$ is
convex and $q$ is nondecreasing.
A direct computation yields
\[
\nabla f(x,y)=(e^x,q(y)) \quad \text{ and } \quad \| \nabla f(x, y) \|^2 = e^{2x} +|y|^p + |y|^{2p}.
\]
Hence $\| \nabla f \|^2$ is convex and the slope $\| \nabla f \|$ is locally Lipschitz, whose proof is similar to that of  Proposition~\ref{prop.higher-regu}-(ii).

\medskip

However, since $p/2<\beta$, we have
\[
\frac{|q(y)-q(0)|}{|y|^\beta}
=
|y|^{p/2-\beta}\sqrt{1 + |y|^p}
\longrightarrow +\infty
\qquad\text{as } y\to0.
\]
Therefore $\nabla f$ is not $\beta$--H\"older at the origin.

\item[(ii)] 
Proposition~\ref{prop.conti_to_C1} shows that H\"older regularity of $\|\nabla f\|$ cannot, in general, be transferred to H\"older regularity of $\nabla f$.  
This is in sharp contrast to the rigidity phenomena for the classical eikonal equation
\[
        \|\nabla f\| \equiv 1,
\]
as studied by Caffarelli--Crandall \cite{CC_2010} and Ignat \cite{I_2025}.  
The point is that these results use the constant slope to establish the $\mathcal{C}^{1, 1}_\mathrm{loc}$ estimates for solutions, not merely regularity of the scalar field $\|\nabla f\|$.  
Assertion $(ii)$ shows that once the identity $\|\nabla f\|\equiv1$ is replaced by the weaker assumption ${\|\nabla f\|\in \mathcal{C}^{0,\beta}_{\mathrm{loc}}}$, the regularity of the slope no longer controls the regularity of the gradient.
\end{itemize}
\end{remark}

\vspace{0.7cm}

\textbf{Acknowledgement.} This research was initiated during a research visit of the first author to VADOR, TU Wien (March 2026). This author thanks his hosts for hospitality. The second author thanks Gilles Godefroy for the proof mentioned in Remark~\ref{rem-god} (concerning \cite[Theorem~6]{M_1964}). The authors thank Alberto Dom\'inguez Corella and Sebasti\'an Tapia Garc\'ia for useful discussions. The research of the second author was partially supported by the Austrian Science Fund (Grant FWF 10.55776/P36344) and by the SABOCPR project ANR-25-CE40-3469-01 and FWF 4368225. The research of the third author was funded by the Austrian Science Fund (FWF) (10.55776/STA223).
\smallskip\newline
For open access purposes, the second author has applied a CC BY public copyright license to any author-accepted manuscript version arising from this submission.


\vspace{0.5cm}

\noindent Tahar Zamene BOULMEZAOUD
\medskip

\noindent 
Laboratoire de Math\'ematiques de Versailles\\
Universit\'e de Versailles Saint-Quentin-en-Yvelines - Universit\'e Paris-Saclay\\
45, avenue des Etats-Unis, 78035, Versailles, Cedex, France,\smallskip\newline
and  \\
Department of Mathematics and Statistics, University of Victoria, Victoria, British
Columbia, Canada. 
\noindent E-mail: \texttt{tahar.boulmezaoud@uvsq.fr}       \newline\noindent\texttt{https://boulmezaoud.perso.math.cnrs.fr/}

\vspace{0.5cm}

\noindent Aris DANIILIDIS 
\medskip

\noindent Institut f\"{u}r Stochastik und Wirtschaftsmathematik, VADOR E105-04
\newline TU Wien, Wiedner Hauptstra{\ss }e 8, A-1040 Wien\medskip
\newline\noindent E-mail: \texttt{aris.daniilidis@tuwien.ac.at}
\newline\noindent\texttt{https://www.arisdaniilidis.at/}
\smallskip\newline\noindent
\noindent Research supported by the Austrian \texttt{FWF} grant \texttt{DOI 10.55776/P-36344N} and by \smallskip \newline the French-Austrian \texttt{SABOCPR} project \texttt{ANR-25-CE40-3469-01} and \texttt{FWF 4368225}. 

\vspace{0.5cm}

\noindent Tr\'i Minh L\^E
\medskip 

\noindent Institut f\"{u}r Mathematik, University of Vienna
\newline Oskar-Morgenstern-Platz 1, 1090 Wien
\medskip
\newline\noindent E-mail: \texttt{tri.minh.le@univie.ac.at}
\newline\noindent\texttt{https://sites.google.com/view/tri-minh-le}
\smallskip\newline\noindent
\noindent Research supported by the Austrian \texttt{FWF} grant \texttt{DOI 10.55776/STA223}.


\newpage


\begin{thebibliography}{99}

\bibitem{AGS_2008}
\textsc{L. Ambrosio, N. Gigli and G. Savar\'e,} {\it Gradient flows in metric spaces and in the space of probability measures}, second edition, 
Lectures in Mathematics ETH Z\"urich, Birkh\"auser, Basel, 2008.

\bibitem{BCD_2018}
\textsc{T. Z. Boulmezaoud, P. Cieutat and A. Daniilidis},
Gradient flows, second-order gradient systems and convexity,
\emph{SIAM J. Optim.} \textbf{28} (2018), 2049–-2066.

\bibitem{B_94}
\textsc{A. M. Bruckner,}
\emph{Differentiation of real functions},
CRM Monograph Series, vol. 5, American Mathematical Society,
Providence, RI, 1994.

\bibitem{CC_2010}
\textsc{L. A. Caffarelli and M. G. Crandall},
Distance functions and almost global solutions of eikonal equations,
\emph{Comm. Partial Differential Equations} \textbf{35} (2010), 391–-414.

\bibitem{DD_2023}
\textsc{A. Daniilidis and D. Drusvyatskiy}, 
The slope robustly determines convex functions, 
\emph{Proc. Amer. Math. Soc.} {\bf 151} (2023), 4751--4756.

\bibitem{DST_2025} 
\textsc{A. Daniilidis, D. Salas and S. Tapia-Garc\'ia}, 
A slope generalization of Attouch theorem, 
\emph{Math. Program.} {\bf 212} (2025), 319--348.


\bibitem{GMT_1980}
\textsc{E. De~Giorgi, A. Marino and M. Tosques}, Problems of evolution in metric spaces and maximal decreasing curve, 
\emph{Atti Accad. Naz. Lincei Rend. Cl. Sci. Fis. Mat. Nat.} (8) {\bf 68} (1980), no.~3, 180--187.

\bibitem{GHN_2015}
\textsc{Y. Giga, N. Hamamuki and A. Nakayasu},
Eikonal equations in metric spaces,
\emph{Trans. Amer. Math. Soc.}
\textbf{367} (2015) 49–-66.

\bibitem{God}
\textsc{G. Godefroy}, \emph{Introduction aux m\'ethodes de Baire},
Calvage \& Mounet, Paris, 2022.


\bibitem{HT_2006}
\textsc{J. W. Hagood and B. S. Thomson},
Recovering a function from a Dini derivative,
\emph{Am. Math. Monthly} \textbf{113} (2006), 34–-46.
\bibitem{I_2025}

\textsc{R. Ignat},
A short proof of the {$\mathcal{C}^{1, 1}$} regularity for the eikonal equation,
\emph{C. R. Math. Acad. Sci. Paris} \textbf{363} (2025),  887–-891.

\bibitem{LSZ_2021}
\textsc{Q. Liu, N. Shanmugalingam and X. Zhou}, 
Equivalence of solutions of eikonal equation in metric spaces,
\emph{J. Differential Equations} \textbf{272} (2021), 979–-1014.

\bibitem{LW_2026}
\textsc{Q. Liu and M.~B.~P. Wiranata}, Monge solutions of time-dependent Hamilton-Jacobi equations in metric spaces, 
\emph{ESAIM Control Optim. Calc. Var.} {\bf 32} (2026), Paper No. 2.

\bibitem{M_1964}
\textsc{M. Mehdi}, On convex functions, \emph{J. London Math. Soc.} {\bf 39} (1964), 321--326.

\bibitem{PSV_2021}
\textsc{P. P\'erez-Aros, D. Salas and E. Vilches}, 
Determination of convex functions via subgradients of minimal norm, \emph{Math. Program.} {\bf 190} (2021), 561--583.

\bibitem{V_2021}
\textsc{E. Vilches}, Proximal determination of convex functions, \emph{J. Convex Anal.} {\bf 28} (2021), 1187--1192.


\bibitem{TZ_2023}
\textsc{L. Thibault and D. Zagrodny}, 
Determining functions by slopes, 
\emph{Commun. Contemp. Math.} {\bf 25} (2023), Paper No. 2250014.

\end{thebibliography}
\end{document}